\pgfplotsset{compat=1.14}
\newtheorem{theorem}{Theorem}[section]
\newtheorem{proposition}[theorem]{Proposition}
\newtheorem{lemma}[theorem]{Lemma}
\theoremstyle{definition}
\newtheorem{definition}[theorem]{Definition}
\theoremstyle{remark}
\newtheorem{remark}[theorem]{Remark}
\newcounter{exocmpt}[section]
\newcounter{partcmpt}[exocmpt]
\newcounter{questioncmpt}[exocmpt]
\newcounter{subquestioncmpt}[questioncmpt]
\theoremstyle{definition}
\newcommand{\vsp}{\vspace{0.2cm}}
\newcommand{\cp}{\mathbb{C}\mathbb{P}}
\newcommand{\h}{\mathbb{H}}
\newcommand{\s}{\mathbb{S}}
\newcommand{\rp}{\mathbb{R}\mathbb{P}}
\newcommand{\real}{\mathbb{R}}
\newcommand{\integer}{\mathbb{Z}}
\newcommand{\pinteger}{\mathbb{N}}
\newcommand{\etoile}{^{\ast}}
\begin{document}

\title{Examples of reflective projective billiards and outer ghost billiards}
\author{Corentin Fierobe}
\date\today
\maketitle

%\tableofcontents inutile ici
%\newpage

\newcommand{\bref}{\mathbb{P}(T\cp^2)}
\newcommand{\az}{\mbox{az}}
\newcommand{\negl}{\mbox{o}}
\newcommand{\projspace}{\mathcal{P}}
\newcommand{\polbill}{\text{Pol}}
\newcommand{\cppol}{\text{CPol}}
\newcommand{\huit}{\frac{1}{8}}

\begin{abstract}
In the class of projective billiards, which contains the usual billiards, we exhibit counter-examples to Ivrii's conjecture, which states that in any planar billiard with smooth boundary the set of periodic orbits has zero measure. The counter-examples are polygons admitting a $2$-parameters family of $n$-periodic orbits, with $n$ being either $3$ or any even integer grower than $4$.
\end{abstract}

\tableofcontents

\section{Introduction}

In the theory of billiard, describing properties of the periodic orbits of the billiard inside a domain is the subject of many studies. A famous conjecture, due to Ivrii states the following: \textit{in any smooth euclidean planar billiard, the set of periodic orbits has zero measure}. The proof of the conjecture was made in the case of a billiard with a regular analytic convex boundary, see \cite{vasiliev}. The conjecture is also true for $3$-periodic orbits, and was proved in \cite{bary, rychlik, stojanov, vorobets, wojtkowski}. For $4$-periodic orbits, it was proven in \cite{glutkud1, glutkud2} and a complete classification of $4$-reflective (defined later) complex analytic billiards was presented in \cite{glut}. Ivrii's conjecture was also studied in manifolds of constant curvature: it was proven to be true for $k=3$ in the hyperbolic plane $\h^2$, see \cite{VKNZ}; the case of the sphere $\s^2$ was apparently firstly studied in \cite{bary_introuvable}, as quoted in \cite{VKNZ} but we were not able to find the correponding paper. The sphere is in fact an example of space were Ivrii's conjecture is not true, and we can find a classification of all counter-examples in \cite{VKNZ}.

\begin{figure}[!ht]
\centering
\input{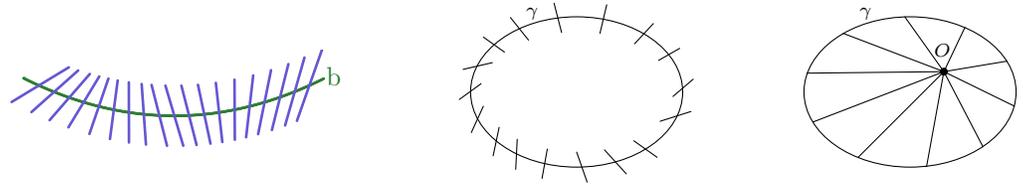}
\hspace{1cm}
\begin{tikzpicture}[line cap=round,line join=round,>=triangle 45,x=1.0cm,y=1.0cm]
\clip(-1.6,-1.2) rectangle (1.6,1.2);
\draw [rotate around={0:(0,0)}] (0,0) ellipse (1.41cm and 1cm);
\draw (-1.56,-0.1)-- (-1.27,0.11);
\draw (-1.51,0.3)-- (-1.12,0.39);
\draw (-1.24,0.73)-- (-0.96,0.53);
\draw (-0.88,0.99)-- (-0.64,0.69);
\draw (-0.3,1.18)-- (-0.2,0.79);
\draw (0.4,1.16)-- (0.31,0.78);
\draw (1.02,0.91)-- (0.77,0.64);
\draw (1.37,0.58)-- (1.09,0.42);
\draw (1.55,0.17)-- (1.28,-0.06);
\draw (1.07,-0.87)-- (0.76,-0.65);
\draw (0.66,-1.09)-- (0.37,-0.77);
\draw (0.14,-1.2)-- (0,-0.8);
\draw (-0.46,-1.15)-- (-0.39,-0.76);
\draw (-0.81,-1.03)-- (-0.79,-0.63);
\draw (-1.11,-0.85)-- (-1.03,-0.47);
\draw (-1.4,-0.54)-- (-1.24,-0.19);
\draw (1.11,-0.39)-- (1.52,-0.26);
\begin{scriptsize}
\draw[color=black] (-0.59,1.04) node {$\gamma$};
\end{scriptsize}
\end{tikzpicture}
\hspace{1cm}
\begin{tikzpicture}[line cap=round,line join=round,>=triangle 45,x=1.0cm,y=1.0cm]
\clip(-1.6,-1.2) rectangle (1.6,1.2);
\draw [rotate around={0:(0,0)}] (0,0) ellipse (1.41cm and 1cm);
\draw (0.45,0.27)-- (-0.07,1);
\draw (0.45,0.27)-- (-0.89,0.78);
\draw (0.45,0.27)-- (-1.37,0.25);
\draw (0.45,0.27)-- (-1.29,-0.4);
\draw (0.45,0.27)-- (-0.69,-0.87);
\draw (0.45,0.27)-- (0.22,-0.99);
\draw (0.45,0.27)-- (0.97,-0.73);
\draw (0.45,0.27)-- (1.39,-0.19);
\draw (0.45,0.27)-- (1.29,0.41);
\draw (0.45,0.27)-- (0.73,0.86);
\begin{scriptsize}
\draw[color=black] (-0.59,1.04) node {$\gamma$};
\fill [color=black] (0.45,0.27) circle (1.5pt);
\draw[color=black] (0.43,0.55) node {$O$};
\end{scriptsize}
\end{tikzpicture}
\caption{Left: a curve $b$ endowed with a field of transverse line. Center: a convex closed curve $\gamma$ with a field of transverse lines. Right: the same curve $\gamma$ with a so-called \textit{centrally-projective} field of transverse lines.}
\label{fig1}
\end{figure}

In this paper we study a generalization of usual billiards, as it is described in  \cite{taba_projectif}. We consider the so called \textit{projective billiards}, which are billiards having their boundaries endowed with a field of transverse lines, called \textit{projective field of lines} (see Figure \ref{fig1}), and defining at each point a new reflection law 
%(see Section \ref{sec:def_projective_billiards} for more details), 
which we call \textit{projective reflection law}: suppose that at a point $p$ of a curve $\gamma$, the transverse line at $p$ is $L$. A line $\ell$ hitting $\gamma$ at $p$ will be reflected into a line $\ell'$ if the lines $(\ell,\ell',L,T_p\gamma)$ are harmonic, meaning that their cross-ratio is $-1$. We recall that the cross-ratio of four lines through the same point $p$ is defined as the cross-ratio of their intersection with a fifth line not passing through $p$, and this doesn't depend on the line taken. Note that this definition makes the projective plane $\rp^2$ a convenient space to study projective billiards.

We can therefore study the billiard dynamics on projective billiard chosing this new law to determine the way a particle bouces inside a table. Note that, given any closed curve $\gamma$, the usual billiard dynamic inside $\gamma$ is also a projective one by choosing the transverse lines to be the normal lines to $\gamma$. In this paper we study examples of projective billiards inside polygons. They are defined as follows:

\vspace{0.2cm}
EXAMPLE 1. Consider a smooth convex planar curve $\gamma$ and an other point $O$ inside $\gamma$ called the origin. Define a transverse field of lines on $\gamma$ by taking the lines passing through the origin, see Figure \ref{fig:centrally_projective}. As in \cite{taba_projectif}, this billiard is called \textit{centrally-projective} and has many interesting properties, including the fact that the billiard transformation has an invariant area form (\cite{taba_projectif}, Theorem D).

In the case when $\gamma$ is a polygon $\mathcal{P}$ with $n$-vertices, $P_0,\ldots,P_{n-1}$, we call this billiard a \textit{centrally-projective polygon}, and denote it by $\cppol(O;P_0,\ldots,P_{n-1})$. When the poygon $\mathcal{P}$ is regular and the origin is the intersection of its great diagonals, we call it \textit{centrally-projective regular polygon}.

\begin{figure}[!ht]
\centering
\begin{tikzpicture}[line cap=round,line join=round,>=triangle 45,x=1.5cm,y=1.5cm]
\clip(-1.6,-1.2) rectangle (1.6,1.2);
\draw [rotate around={0:(0,0)}] (0,0) ellipse (2.12cm and 1.5cm);
\draw (0.45,0.27)-- (-0.07,1);
\draw (0.45,0.27)-- (-0.89,0.78);
\draw (0.45,0.27)-- (-1.37,0.25);
\draw (0.45,0.27)-- (-1.29,-0.4);
\draw (0.45,0.27)-- (-0.69,-0.87);
\draw (0.45,0.27)-- (0.22,-0.99);
\draw (0.45,0.27)-- (0.97,-0.73);
\draw (0.45,0.27)-- (1.39,-0.19);
\draw (0.45,0.27)-- (1.29,0.41);
\draw (0.45,0.27)-- (0.73,0.86);
\begin{scriptsize}
\draw[color=black] (-0.59,1.04) node {$\gamma$};
\fill [color=black] (0.45,0.27) circle (1.5pt);
\draw[color=black] (0.43,0.55) node {$O$};
\end{scriptsize}
\end{tikzpicture}
\hspace{2cm}
\begin{tikzpicture}[line cap=round,line join=round,>=triangle 45,x=0.7cm,y=0.7cm]
\clip(-3.7,-2.5) rectangle (3.7,3.5);
\draw (-2,-2)-- (-3,1);
\draw (-3,1)-- (2,3);
\draw (2,3)-- (3,-2);
\draw (-2,-2)-- (3,-2);
\draw [dotted] (-2,-2)-- (2,3);
\draw [dotted] (-3,1)-- (3,-2);
\draw [dash pattern=on 1pt off 1pt] (-0.57,-0.21)-- (-2.94,-0.84);
\draw [dash pattern=on 1pt off 1pt] (-0.57,-0.21)-- (-0.48,2.46);
\draw [dash pattern=on 1pt off 1pt] (-0.57,-0.21)-- (2.95,0.71);
\draw [dash pattern=on 1pt off 1pt] (-0.57,-0.21)-- (0.4,-2.46);
\draw (-3.39,0.53)-- (-2.2,0.21);
\draw (-3.17,-0.15)-- (-2.06,-0.18);
\draw (-2.94,-0.84)-- (-1.93,-0.58);
\draw (-2.68,-1.62)-- (-1.78,-1.02);
\draw (-1.5,-2.46)-- (-1.11,-1.51);
\draw (-0.57,-1.51)-- (-0.56,-2.46);
\draw (-0.01,-1.51)-- (0.4,-2.46);
\draw (1.25,-2.46)-- (0.48,-1.51);
\draw (1.06,-1.51)-- (2.25,-2.46);
\draw (3.33,-1.23)-- (2.3,-0.96);
\draw (3.2,-0.55)-- (2.2,-0.46);
\draw (3.07,0.1)-- (2.1,0.01);
\draw (2.01,0.46)-- (2.95,0.71);
\draw (2.82,1.34)-- (1.92,0.93);
\draw (1.82,1.44)-- (2.68,2.04);
\draw (2.55,2.68)-- (1.72,1.92);
\draw (1.36,3.19)-- (0.7,2.03);
\draw (0.53,2.86)-- (0.15,1.81);
\draw (-0.48,2.46)-- (-0.51,1.54);
\draw (-0.96,1.37)-- (-1.16,2.19);
\draw (-1.96,1.87)-- (-1.48,1.16);
\draw (-1.95,0.97)-- (-2.65,1.58);
\begin{scriptsize}
\fill [color=black] (-2,-2) circle (1.5pt);
\draw[color=black] (-2.1,-2.13) node {$P_0$};
\fill [color=black] (-3,1) circle (1.5pt);
\draw[color=black] (-3.2,1.09) node {$P_1$};
\fill [color=black] (2,3) circle (1.5pt);
\draw[color=black] (2.12,3.2) node {$P_2$};
\fill [color=black] (3,-2) circle (1.5pt);
\draw[color=black] (3.19,-2.05) node {$P_3$};
\fill [color=black] (-0.57,-0.21) circle (1.5pt);
\draw[color=black] (-0.7,0.04) node {$O$};
\draw[color=black] (-1.69,-0.34) node {$L_0$};
\draw[color=black] (-0.33,1.17) node {$L_1$};
\draw[color=black] (1.88,0.26) node {$L_2$};
\draw[color=black] (-0.24,-1.35) node {$L_3$};
\end{scriptsize}
\end{tikzpicture}
\caption{Left: a convex closed curve $\gamma$ with a so-called \textit{centrally-projective} field of transverse lines through an origin $O$. Right: a centrally-projective quadrilateral $P_0P_1P_2P_3$ with origin $O$.}
\label{fig:centrally_projective}
\end{figure}
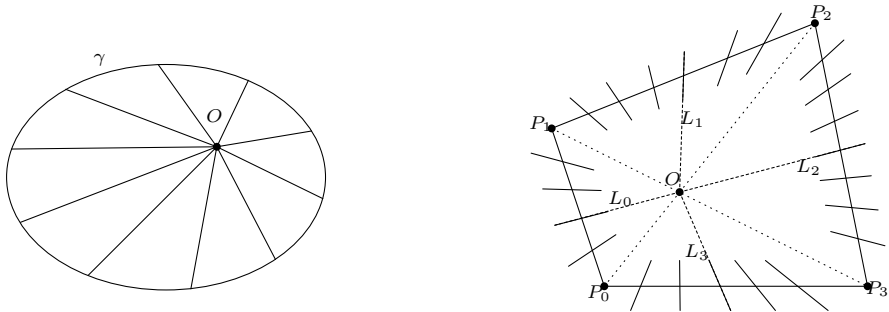

\vspace{0.2cm}
EXAMPLE 2. Consider a triangle $P_0P_1P_2$ in the (projective) plane. On a side $P_iP_{i+1}$, define a field of transverse lines by taking the lines passing through the remaining vertex of the triangle, $P_{i-1}$, see Figure \ref{fig:spherical_presentation}. We call this projective billiard \textit{right-spherical billiard based at $(P_0,P_1,P_2)$.} and denote it by $\mathcal{S}(P_0,P_1,P_2)$.

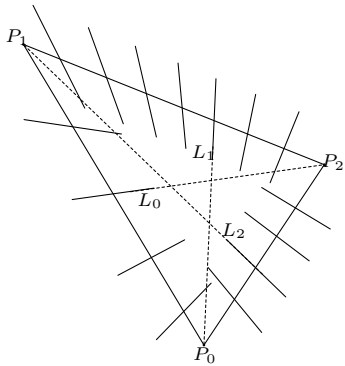
\begin{figure}[!ht]
\centering
\begin{tikzpicture}[line cap=round,line join=round,>=triangle 45,x=0.8cm,y=0.8cm]
\clip(-3.4,-2.5) rectangle (2.4,4);
\draw (2,1)-- (0,-2);
\draw (0,-2)-- (-3,3);
\draw (-3,3)-- (2,1);
\draw (-2.84,3.65)-- (-1.98,1.94);
\draw (-1.92,3.28)-- (-1.34,1.68);
\draw (-1.14,2.97)-- (-0.79,1.46);
\draw (-0.43,2.69)-- (-0.3,1.27);
\draw (0.2,2.43)-- (0.14,1.09);
\draw (0.86,2.17)-- (0.6,0.9);
\draw (1.58,1.88)-- (1.1,0.7);
\draw (-2.99,1.75)-- (-1.37,1.51);
\draw (-2.19,0.42)-- (-0.83,0.61);
\draw (-1.43,-0.84)-- (-0.32,-0.25);
\draw (-0.79,-1.91)-- (0.12,-0.97);
\draw (0.07,-0.71)-- (0.96,-1.79);
\draw (1.36,-1.2)-- (0.38,-0.25);
\draw (0.68,0.21)-- (1.75,-0.6);
\draw (0.96,0.62)-- (2.1,-0.07);
\draw [line width=0.4pt,dash pattern=on 1pt off 1pt] (0.97,-0.82)-- (-3,3);
\draw [line width=0.4pt,dash pattern=on 1pt off 1pt] (-1.22,0.55)-- (2,1);
\draw [line width=0.4pt,dash pattern=on 1pt off 1pt] (0.15,1.3)-- (0,-2);
\begin{scriptsize}
\fill [color=black] (2,1) circle (0.5pt);
\draw[color=black] (2.15,1.04) node {$P_2$};
\fill [color=black] (0,-2) circle (0.5pt);
\draw[color=black] (0.02,-2.17) node {$P_0$};
\fill [color=black] (-3,3) circle (0.5pt);
\draw[color=black] (-3.1,3.12) node {$P_1$};
\draw[color=black] (0.5,-0.1) node {$L_2$};
\draw[color=black] (-0.9,0.4) node {$L_0$};
\draw[color=black] (-0.0,1.2) node {$L_1$};
\end{scriptsize}
\end{tikzpicture}
\caption{The right-spherical billiard $\mathcal{S}(P_0,P_1,P_2)$ with each one of its fields of transverse lines}
\label{fig:spherical_presentation}
\end{figure}

Its name comes from a famous construction on the two-dimensionnal sphere $\s^2$: consider a triangle $Q_0Q_1Q_2$ on $\s^2$ having right angles at all its vertices (its area is one eighth of the total surface of $\s^2$). This triangle is a well-known example of $2$-reflective billiard on the sphere, as explained in \cite{bary_introuvable,VKNZ}. Now consider the projection of $\s^2$ from its center to a plane $\mathcal{P}$ tangent to the sphere at a point located inside $Q_0Q_1Q_2$: $Q_0Q_1Q_2$ is projected on a triangle $P_0P_1P_2$ of $\mathcal{P}$, and the normal geodesics to the sides of $Q_0Q_1Q_2$ are projected into the projective field of lines making $P_0P_1P_2$ a right-spherical billiard.

\vspace{0.2cm}
Here we consider a simplified version of the billiard dynamic, where the name ghost come from, and which is enough to study examples of curves having the reflectivity property (defined later). In the particular case of a polygon $\mathcal{P}=P_0\cdots P_{n-1}$ endowed with a field of transverse lines on each of its sides, we extend the definition of edges of $\mathcal{P}$ to be the whole lines supporting the usual edges. We force the successive bounces of a billiard trajectory to be on successive edges of $\mathcal{P}$, by forgetting any possible obstacles on the ball trajectory between two consecutive edges. More precisely, fix two distinct points $M_0\in P_0P_1$ and $M_1\in P_1P_2$ on two consecutive edges. We define  the orbit of $(M_0,M_1)$ as follows:

\begin{definition}
The \textit{orbit of $(M_0,M_1)$} is the sequence $(M_k)_{k\in\integer}$ where for each $k$, $M_{k}$ is a point on $P_kP_{k+1}$ (where $k$ is taken modulo $n$) and the lines $M_{k-1}M_k$ and $M_kM_{k+1}$ are symmetric with respect to the projective reflection law at $M_k$. 

Let $m=kn$ be a positive mutliplier of $n$. The orbit is called\textit{ $m$-periodic} if $M_{m}M_{m+1}=M_0M_1$. The corresponding projective billiard is said to be \textit{$m$-reflective} if there is a non-empty open subset $U\times V\subset P_0P_1\times P_1P_2$ of $(M_0,M_1)$ whose orbits are $m$-periodic.
\end{definition}

In the following, we use these examples to construct reflective projective billiards (having the additional particular property that $U\times V=P_0P_1\times P_1P_2$, which can be seen as a consequence of analyticity of the billiard map). First, since the right-spherical billiard is obtained by projecting a certain $3$-reflective billiard of $\s^2$ on the plane, we expect that it is also $2$-reflective (see Figure \ref{fig:3_reflective}). Indeed, we have the

\begin{proposition}
\label{res:spherical_3_reflective}
The right-spherical billiard, $\mathcal{S}(P_0,P_1,P_2)$, is $3$-reflective.
\end{proposition}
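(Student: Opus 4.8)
The plan is to work in homogeneous coordinates on $\rp^2$ in which the triangle is the coordinate simplex: I would set $P_0=[1:0:0]$, $P_1=[0:1:0]$, $P_2=[0:0:1]$, so that the three sides become the coordinate lines $P_1P_2=\{x=0\}$, $P_2P_0=\{y=0\}$ and $P_0P_1=\{z=0\}$. A projective line is then recorded by its coefficient vector $(A:B:C)$, meaning $Ax+By+Cz=0$, and a trajectory is tracked simply by following this vector through the successive reflections. The first task is to rewrite the projective reflection law at a point of a side entirely in terms of these coefficients.

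The key step is to show that the reflection at a point of a given side is nothing but the sign flip of one coordinate of the line. Fix $M$ on the side $\{x=0\}$, whose transverse line is the cevian $L$ through the opposite vertex $P_0$. Both the tangent $T=\{x=0\}$ and the line $L$ pass through $M$, and the reflection sends an incoming line $\ell$ to the outgoing line $\ell'$ that is the harmonic conjugate of $\ell$ with respect to $L$ and $T$ in the pencil through $M$. Since a non-trivial involution of a pencil is determined by its two fixed lines, and since the coordinate sign flip $(A:B:C)\mapsto(-A:B:C)$ is an involution fixing precisely $T$ (the line $(1:0:0)$) and $L$ (the unique line of the form $(0:B:C)$ through $M$), this sign flip must coincide with the harmonic reflection. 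I would confirm the identification by checking directly that cross-ratio $-1$ amounts to negating the ratio of the $T$-component to the $L$-component of the line. By the cyclic symmetry of the construction, the reflection on $\{y=0\}$ (transverse through $P_1$) flips $B$, and the reflection on $\{z=0\}$ (transverse through $P_2$) flips $C$.

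With this in hand the periodicity is immediate. An orbit starting at $M_0\in P_0P_1$ reflects successively at $M_1\in P_1P_2$ (flip $A$), then at $M_2\in P_2P_0$ (flip $B$), then at $M_3\in P_0P_1$ (flip $C$); the carried coefficient vector evolves as $(A:B:C)\mapsto(-A:B:C)\mapsto(-A:-B:C)\mapsto(-A:-B:-C)$. As the last vector is projectively equal to $(A:B:C)$, the outgoing line $M_3M_4$ coincides with the incoming line $M_0M_1$, so every orbit is $3$-periodic. Since this holds for all admissible $(M_0,M_1)$, the reflectivity domain is the whole of $P_0P_1\times P_1P_2$, which establishes $3$-reflectivity.

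I do not expect a serious obstacle in this argument; the only delicate point is the bookkeeping in the key step — correctly translating the harmonic cross-ratio condition into the coordinate sign flip and verifying that the tangent and the transverse cevian really are the two fixed lines of that involution. Once the three reflections are diagonalized in this way, the three-fold periodicity drops out formally, and one needs only minor care to exclude the degenerate positions in which some $M_k$ lands on a vertex of the triangle.
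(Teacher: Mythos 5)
Your proof is correct, but it takes a genuinely different route from the paper's. The paper argues synthetically: it defines $M_2$ by reflecting $M_0M_1$ at $M_1$ and $M_2'$ by reflecting it at $M_0$, shows $M_2=M_2'$ by comparing two harmonic quadruples of points cut out on the line $\gamma_2$ (both containing the auxiliary point $A=M_0M_1\cap\gamma_2$ and the vertices $P_0,P_2$), and then closes the orbit by reading the reflection law at $M_2$ off a harmonic quadruple of points on $\gamma_0$ --- everything rests on uniqueness of the harmonic conjugate. You instead linearize: in the coordinates dual to the simplex, each of the three projective reflections is the restriction to a pencil of one global involution of the dual plane, namely $\mathrm{diag}(-1,1,1)$, $\mathrm{diag}(1,-1,1)$, $\mathrm{diag}(1,1,-1)$ acting on line coefficients, and their composite is $-\mathrm{Id}$, which is the identity in $\mathrm{PGL}_3(\real)$. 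Your key lemma (that the sign flip is the projective reflection) is correctly justified: the flip is a nontrivial projective involution of the pencil at $M$ fixing exactly $T$ and $L$, and such an involution is harmonic conjugation with respect to its two fixed lines. What your approach buys: one uniform computation valid at every point of a side, after which periodicity drops out formally; the conclusion comes in its strongest form (every orbit closes, so $U\times V=P_0P_1\times P_1P_2$, matching the paper's Proposition 2.1 rather than just the open-set requirement); and tracking line coefficients is implicitly the polar dualization that the paper only introduces later (Section 5) for centrally-projective polygons, so your argument makes the two families of examples look structurally alike. What the paper's approach buys: it is coordinate-free, and its harmonic-quadruple bookkeeping is exactly the technique reused in the $4$-reflective and $n$-reflective proofs, so it serves as a warm-up for the rest of the paper.
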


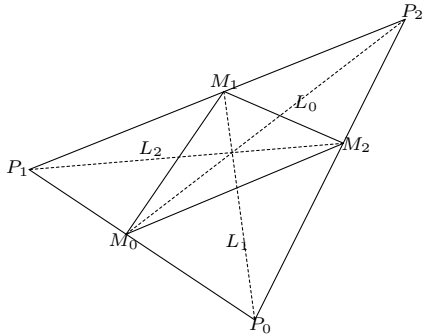
\begin{figure}[!h]
\centering
\begin{tikzpicture}[line cap=round,line join=round,>=triangle 45,x=1cm,y=1cm]
\clip(-3.5,-2.5) rectangle (3,2.5);
\draw (-3,0)-- (0,-2);
\draw (0,-2)-- (2,2);
\draw (2,2)-- (-3,0);
\draw (-1.71,-0.86)-- (-0.41,1.04);
\draw (-0.41,1.04)-- (1.18,0.35);
\draw [line width=0.4pt,dash pattern=on 1pt off 1pt] (-1.71,-0.86)-- (2,2);
\draw [line width=0.4pt,dash pattern=on 1pt off 1pt] (-0.41,1.04)-- (0,-2);
\draw [line width=0.4pt,dash pattern=on 1pt off 1pt] (1.18,0.35)-- (-3,0);
\draw (-1.71,-0.86)-- (1.18,0.35);
\begin{scriptsize}
\draw[color=black] (-3.15,0.02) node {$P_1$};
\draw[color=black] (0.08,-2.07) node {$P_0$};
\draw[color=black] (2.1,2.1) node {$P_2$};
\draw[color=black] (-1.74,-0.95) node {$M_0$};
\draw[color=black] (-0.4,1.17) node {$M_1$};
\draw[color=black] (1.35,0.33) node {$M_2$};
\draw[color=black] (0.68,0.88) node {$L_0$};
\draw[color=black] (-0.23,-0.97) node {$L_1$};
\draw[color=black] (-1.38,0.28) node {$L_2$};
\end{scriptsize}
\end{tikzpicture}
\caption{The spherical billiard $\mathcal{S}(P_0,P_1,P_2)$ and a triangular orbit $(M_0,M_1,M_2)$ obtained by reflecting any segment $M_0M_1$ two times}
\label{fig:3_reflective}
\end{figure}

In Section \ref{sec:n_4}, we construct classes of $4$-reflective projective billiards which cannot be deduced from one another. More precisely, we build the first one by \textit{gluing} two right-spherical billiards. The second one is a centrally-projective polygon, as stated by

\begin{proposition}
\label{res:quadrilateral_4_reflective}
Let $P_0,P_1,P_2,P_3$ be a non-degenerate quadrilateral and $O$ the intersection point of its diagonals. The centrally-projective polygon $\cppol(O;P_0,P_1,P_2,P_3)$ is $4$-reflective.
\end{proposition}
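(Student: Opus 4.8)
The plan is to exploit two facts: that $m$-reflectivity is a projective invariant, and that, for a centrally-projective polygon, the reflection at each side is an exceptionally simple projective map, namely a harmonic homology centered at $O$.

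\medskip
\noindent\emph{Reduction to a square.} Since the projective reflection law is defined purely through a cross-ratio, any projective transformation $\varphi$ of $\rp^2$ conjugates $\cppol(O;P_0,P_1,P_2,P_3)$ to $\cppol(\varphi(O);\varphi(P_0),\ldots,\varphi(P_3))$, carrying orbits to orbits and periodic orbits to periodic orbits; moreover $\varphi$ sends the intersection point of the two diagonals to the intersection point of the two image diagonals. Hence $4$-reflectivity is a projective invariant, and because the four vertices of a non-degenerate quadrilateral are in general position I may normalize them to $(P_0,P_1,P_2,P_3)=\big((1,1),(-1,1),(-1,-1),(1,-1)\big)$. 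Then $O$ is the origin and the four side-lines are $y=\pm1$ and $x=\pm1$.

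\medskip
\noindent\emph{Reflection as a harmonic homology.} The key step is to identify the reflection at a side. At a point $M$ of the side $s_k$ the tangent is $s_k$ itself and the transverse line is $OM$, so the law sends an incoming line $\ell$ to the outgoing line $\ell'$ that is the harmonic conjugate of $\ell$ with respect to the pair $\{OM,s_k\}$ in the pencil of lines through $M$. I claim this is exactly the action on lines of the harmonic homology $H_k$ with center $O$ and axis $s_k$. Indeed $H_k$ fixes $M\in s_k$, hence preserves the pencil at $M$ and acts on this $\proj^1$ as a projective involution whose fixed lines are precisely $s_k$ (the axis) and $OM$ (a line through the center); the unique such involution is harmonic conjugation with respect to $\{OM,s_k\}$. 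Thus each reflection extends to a globally defined projective involution of $\rp^2$, with no exceptional behaviour away from the vertices.

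\medskip
\noindent\emph{The return map.} The orbit $(M_0,M_1,\ldots)$ is $4$-periodic exactly when, after the successive reflections at $s_1,s_2,s_3,s_0$, the line $M_0M_1$ is carried back to itself; by the previous paragraph it therefore suffices to show that the composition of the four homologies, taken in the cyclic order of the sides, is the identity (this then holds for its induced action on lines as well). In the normalization above, writing $O=[0:0:1]$ and the four axes as the linear forms $Y-Z$, $X+Z$, $Y+Z$, $X-Z$, each homology takes the form
$$H_k:\ [X:Y:Z]\longmapsto[\,X:Y:p_kX+q_kY-Z\,],$$
with $(p_k,q_k)$ equal to $(0,2)$, $(-2,0)$, $(0,-2)$, $(2,0)$ for $k=0,1,2,3$. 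Since all four matrices act trivially on the first two coordinates, the product collapses to a short computation in the last coordinate and gives $H_0H_3H_2H_1=\mathrm{Id}$. Equivalently, the order-four rotation $R$ of the square conjugates $H_k$ to $H_{k+1}$, so the full return map on lines is $(H_0R^{-1})^4$, and one checks that the single $3\times3$ matrix $H_0R^{-1}$ has order four. In either form, every orbit whose trajectory avoids the vertices is $4$-periodic, so one may take $U\times V$ to be all of $P_0P_1\times P_1P_2$ minus the (closed, measure-zero) degenerate locus; this is a non-empty open set, and the billiard is $4$-reflective.

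\medskip
\noindent\emph{Main obstacle.} The only delicate point is the identification in the second paragraph of the cross-ratio reflection law with the harmonic homology centered at $O$ with axis the side-line, together with the bookkeeping of which side each $H_k$ fixes and the order of composition; once this is established, the remaining verification is the routine matrix identity $H_0H_3H_2H_1=\mathrm{Id}$, which is immediate from the triangular form above.
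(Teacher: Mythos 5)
Your proof is correct, but it follows a genuinely different route from the paper's (Proposition~\ref{prop:4_reflective}). The paper argues synthetically on the arbitrary quadrilateral: each reflection condition at $M_0,M_1,M_2,M_3$ is read off as a harmonic quadruple of \emph{points} cut out on the two diagonals $P_0P_2$ and $P_1P_3$ (this is precisely where the hypothesis that $O$ is the diagonal intersection enters), and an incidence chase shows that the fourth reflected line must coincide with $M_0M_3'$; no coordinates or normalization appear. You instead prove the structural lemma that reflection at a side of a centrally-projective billiard is the action on line-pencils of the harmonic homology with center $O$ and axis that side --- a globally defined projective involution --- after which, having reduced to the square by projective invariance (a legitimate reduction: the cross-ratio law is projectively invariant and diagonals map to diagonals), $4$-reflectivity becomes the identity $H_0H_3H_2H_1=\mathrm{Id}$, whose matrix verification I confirm (as is your side remark that $H_0R^{-1}$ has order four). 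Both arguments in fact give the stronger conclusion that every non-degenerate orbit is $4$-periodic, not merely an open set of them. What your route buys: the homology lemma is not specific to $n=4$ and converts the line-dynamics of any centrally-projective polygon into a word in projective involutions sharing the center $O$; notably, under the polar duality with respect to $O$ that the paper uses in Section~\ref{sec:centrally_proj_poly}, your $H_k$ dualizes to the point-symmetry about the dual vertex $Q_k$ (a harmonic homology whose axis is the line at infinity), so your lemma is exactly the projective-side counterpart of the outer-ghost-billiard construction behind Proposition~\ref{res:centrally_2n_reflective}. What the paper's route buys: it is coordinate-free, exhibits explicitly which harmonic point-quadruples on the diagonals force closure, and keeps the geometric role of the diagonals visible, which your normalized matrix computation absorbs into the choice of coordinates.
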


In Section \ref{sec:n_even}, we generalize the case $n=4$ to any even integer $n\geq 4$, with restrictions on the polygon however:

\begin{proposition}
\label{res:regular_centrally_reflective}
Let $n\geq 4$ be even. Let $P_0\cdots P_{n-1}$ be a regular polygon and $O$ be the intersection point of its diagonals. The centrally-projective regular billiard $\cppol(O;P_0,\ldots,P_{n-1})$ is $n$-reflective.
\end{proposition}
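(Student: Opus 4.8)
The plan is to exploit the full rotational symmetry of the regular polygon to reduce the closing-up condition to a single eigenvalue computation on the dual projective plane. Normalize so that $O$ is the origin, and (up to a homothety centered at $O$, which preserves the central field of lines) assume the inradius equals $1$. Let $R=R_{2\pi/n}$ be the rotation about $O$ carrying the side $s_k:=P_kP_{k+1}$ (extended to its supporting line, as in the definition) to $s_{k+1}$; it preserves the whole centrally-projective structure since it fixes $O$ and permutes the sides. I encode the dynamics on the space of lines: writing $\ell_k$ for the line $M_kM_{k+1}$ and $F_k$ for the one-step reflection off $s_k$ (send a line to its harmonic reflection at its intersection with $s_k$), one has $\ell_k=F_k(\ell_{k-1})$, and the orbit is $n$-periodic exactly when the monodromy $F_{n-1}\cdots F_1F_0$ fixes $\ell_0$. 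Symmetry yields $F_k=R^kF_0R^{-k}$, so, using $R^n=\mathrm{id}$, the product telescopes to a conjugate of $H^n$ with $H:=F_0R^{-1}$. Hence $n$-reflectivity, with $U\times V$ the whole product, will follow once I show $H^n=\mathrm{id}$.

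The crucial step is to observe that $F_0$ is the restriction of a projectivity of $(\rp^2)^{\ast}$. Indeed, at each point $M$ the harmonic reflection is the unique projective involution of the pencil through $M$ fixing the tangent (the side) and the transverse line $OM$, while intersecting a line with the fixed side $s_0$ is itself projective. Concretely, placing $s_0=\{x=1\}$ and writing the line $ax+by+c=0$ as $[a:b:c]$, a short computation identifies the harmonic reflection fixing the vertical and radial directions along $s_0$ with
$$F_0:\ [a:b:c]\longmapsto[a+2c:\,b:\,-c],$$
an involution in $\mathrm{PGL}_3$; and $R$ acts on line-coordinates by rotating the $(a,b)$-block. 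I would verify directly that the fixed directions of this map are the tangent and the line $OM$ at every point of $s_0$, and that $F_0^2=\mathrm{id}$.

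It then remains to compute the eigenvalues of $H=F_0R^{-1}$. With $\beta=2\pi/n$ its matrix is
$$H=\begin{pmatrix}\cos\beta & \sin\beta & 2\\ -\sin\beta & \cos\beta & 0\\ 0 & 0 & -1\end{pmatrix},$$
which is block-triangular with characteristic polynomial $(-1-\lambda)(\lambda^2-2\cos\beta\,\lambda+1)$, hence eigenvalues $\{-1,\,e^{i\beta},\,e^{-i\beta}\}$. For $n\ge 4$ these are pairwise distinct, so $H$ is diagonalizable, and $H^n$ has eigenvalues $\{(-1)^n,\,e^{in\beta},\,e^{-in\beta}\}=\{(-1)^n,1,1\}$. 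When $n$ is \emph{even} this is $\{1,1,1\}$, so $H^n$ is scalar, i.e. $H^n=\mathrm{id}$ in $\mathrm{PGL}_3$; therefore every generic orbit (avoiding the vertices and $O$, a complement of measure zero) closes up after $n$ reflections, which is the claimed $n$-reflectivity.

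The main obstacle is the linearization: proving that a single harmonic reflection off a fixed side is globally projective-linear on the dual plane, and pinning down its matrix correctly. Once this is in place, the telescoping and the eigenvalue computation are routine, and the parity hypothesis enters exactly as the condition $(-1)^n=1$ that forces $H^n$ to be scalar — consistent with the failure of the argument, and of the statement, for odd $n$.
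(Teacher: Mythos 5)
Your argument is correct, and it takes a genuinely different route from the paper's. The paper works synthetically in the primal plane: its key step is Lemma \ref{lemma:dtes_concourantes}, proved by induction on $r$ using harmonic quadruples of points and the \emph{mirror} symmetry of the regular polygon across each great diagonal $L_m$, showing that $M_{m-r-2}M_{m-r-1}$ and $M_{m+r}M_{m+r+1}$ meet $L_m$ at a common point; Proposition \ref{prop:even_polygon_reflective} then follows by applying the lemma for three choices of $(m,r)$. You instead dualize and linearize, exploiting the \emph{rotational} symmetry: each reflection $F_k$ extends to a projectivity of the dual plane, the symmetry gives $F_k=R^kF_0R^{-k}$, the monodromy telescopes to $R^{-1}H^nR$ with $H=F_0R^{-1}$, and the eigenvalues $\{-1,e^{\pm 2\pi i/n}\}$ force $H^n=\mathrm{id}$ exactly when $n$ is even. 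I checked the steps you flagged as to-be-verified: with $s_0=\{x=1\}$ and dual coordinates $[a:b:c]$ for the line $ax+by+c=0$, the harmonic reflection off $s_0$ with transverse lines through $O$ is indeed $[a:b:c]\mapsto[a+2c:b:-c]$ (a harmonic homology fixing the pencil of lines through $O$ and the dual point of $s_0$), your matrix for $H$ and its characteristic polynomial $(-1-\lambda)(\lambda^2-2\cos\beta\,\lambda+1)$ are right, and diagonalizability (three distinct eigenvalues for $n\geq 4$) correctly excludes the only danger, namely a unipotent, translation-type monodromy. It is worth noting that your linearization is essentially the duality the paper itself develops in Section \ref{sec:centrally_proj_poly}: under polar duality with respect to $O$, your $F_0$ becomes the point symmetry $N\mapsto 2Q_0-N$ about the dual point of the side, so your proof can be read as applying the outer-ghost-billiard machinery to the regular even polygon, with the closing-up condition reduced to an eigenvalue (equivalently, the vanishing of an alternating sum of the $Q_k$). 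Comparing what each buys: the paper's induction stays within elementary harmonic-quadruple bookkeeping and uses only the reflection symmetries; yours isolates exactly where regularity (rotational symmetry) and parity ($(-1)^n=1$) enter, and as a bonus it shows the monodromy for odd $n$ is a nontrivial involution, explaining why the statement fails there.
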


This result is not true for $n$ even. However, if one allows two times more bounces, it becomes true, and not only for regular polygons. More precisely, we state the following

\begin{proposition}
\label{res:centrally_2n_reflective}
Let $n\geq 3$ be odd. Let $P_0\cdots P_{n-1}$ be a polygon and $O$ be any point not lying on the edges of $P$. The centrally-projective billiard $\cppol(O;P_0,\ldots,P_{n-1})$ is $2n$-reflective.
\end{proposition}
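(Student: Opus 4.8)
The plan is to pass to the dual projective plane and show that a single reflection becomes a fixed \emph{harmonic homology}, so that one loop around the polygon is a central collineation whose ``multiplier'' equals $(-1)^n$; for odd $n$ this forces the loop map to be an involution, and two loops then give the identity.

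First I would set up the dual picture. Encode each trajectory line $\gamma_k := (M_kM_{k+1})$ as a point of the dual plane $(\rp^2)\etoile$, each edge line $\ell_k=P_kP_{k+1}$ as a \emph{fixed} point $\sigma_k\in(\rp^2)\etoile$, and the origin $O$ as a \emph{fixed} line $o\subset(\rp^2)\etoile$. Since the transverse line at $M_k$ passes through $O$, its dual point $\tau_k$ lies on $o$; and since $O$ is off every edge, $\sigma_k\notin o$. The four lines through $M_k$ entering the reflection law, namely $(M_{k-1}M_k),(M_kM_{k+1}),(OM_k),\ell_k$, dualize to four collinear points $\gamma_{k-1},\gamma_k,\tau_k,\sigma_k$ on the dual line $\mu_k$ of $M_k$; because $M_k=\gamma_{k-1}\cap\ell_k$ one gets $\mu_k=\overline{\sigma_k\gamma_{k-1}}$, hence $\tau_k=\overline{\sigma_k\gamma_{k-1}}\cap o$, which is determined by $\gamma_{k-1}$ and the fixed data alone.

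The crucial step is to recognize the reflection law $(\gamma_{k-1},\gamma_k;\tau_k,\sigma_k)=-1$ as the action of the harmonic homology $H_k$ with center $\sigma_k$ and axis $o$: by definition $H_k$ sends a point $X$ to the harmonic conjugate of $X$ with respect to $\sigma_k$ and $\overline{\sigma_k X}\cap o$, which is exactly the rule producing $\gamma_k$ from $\gamma_{k-1}$. Thus the dynamics is $\gamma_k=H_k(\gamma_{k-1})$, and since $\sigma_k$ depends only on $k\bmod n$, so does $H_k$. Every $H_k$ fixes the axis $o$ pointwise, so all $H_k$ lie in the group $G$ of central collineations with axis $o$. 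Choosing coordinates with $o=\{w=0\}$, such a collineation is represented by $\left(\begin{smallmatrix}1&0&p\\0&1&q\\0&0&d\end{smallmatrix}\right)$, the assignment $d\colon G\to\real\etoile$ is a homomorphism, a harmonic homology (an involution with axis $o$ and center off $o$) has $d=-1$, and conversely any element of $G$ with $d=-1$ squares to the identity. Going around the polygon twice applies $H_1,\dots,H_n$ twice, so the return map is $g^2$ with $g=H_n\cdots H_1$; by multiplicativity $d(g)=(-1)^n=-1$ for odd $n$, whence $g^2=\mathrm{id}$. Therefore $\gamma_{2n}=\gamma_0$, i.e.\ $M_{2n}M_{2n+1}=M_0M_1$, for every admissible initial pair, which is $2n$-reflectivity.

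The main obstacle I expect is the second step: the careful duality bookkeeping identifying the reflection with the harmonic homology --- in particular the collinearity of $\gamma_{k-1},\gamma_k,\tau_k,\sigma_k$ and the identity $\tau_k=\overline{\sigma_k\gamma_{k-1}}\cap o$ --- together with handling the degenerate initial positions where a trajectory line passes through $O$ or through a vertex, which must be excluded. These bad configurations form a proper closed subvariety, so an honest nonempty open set $U\times V\subset P_0P_1\times P_1P_2$ of $2n$-periodic orbits survives, as required. The parity is visibly essential: for even $n$ one only gets $d(g)=+1$, consistent with the failure of $n$-reflectivity in general and with the extra symmetry needed in Proposition \ref{res:regular_centrally_reflective}.
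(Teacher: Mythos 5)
Your proof is correct, and its first half coincides with the paper's central idea: the paper also dualizes with respect to $O$ (Section \ref{sec:centrally_proj_poly}), noting that since every transverse line passes through $O$ its dual point lies on the line $o=O\etoile$, so the projective reflection at $M_k$ dualizes to the map sending the dual point of $M_{k-1}M_k$ to its harmonic conjugate with respect to $Q_k=(P_kP_{k+1})\etoile$ and the trace on $o$ --- what you call the harmonic homology $H_k$, and what the paper, working in the affine chart where $o$ is the line at infinity, writes as the point reflection $\overrightarrow{N_kQ_k}=\overrightarrow{Q_kN_{k+1}}$ of the ``outer ghost billiard'' (Proposition \ref{prop:dual_association}). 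Where you genuinely diverge is in proving that this dual dynamics is $2n$-periodic. The paper computes with vectors: Chasles' relation gives $\overrightarrow{N_kN_{k+2}}=2\overrightarrow{Q_kQ_{k+1}}$ (Lemma \ref{lemma:thales}), and $\overrightarrow{N_0N_{2n}}=2\sum_{j=0}^{n-1}\overrightarrow{Q_{2j}Q_{2j+1}}$ vanishes because, $n$ being odd, doubling is a bijection modulo $n$, so this is the full closed-polygon sum. You instead argue structurally: all $H_k$ lie in the group of central collineations with axis $o$, the multiplier $d$ is a homomorphism equal to $-1$ on each harmonic homology, so $g=H_n\cdots H_1$ has $d(g)=(-1)^n=-1$ and is itself an involution, whence $g^2=\mathrm{id}$ without locating any center or translation vector. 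The two uses of parity are equivalent (an odd number of point reflections composes to a point reflection, an even number to a translation), but yours is more conceptual: it explains why for even $n$ one only gets a translation, consistent with the paper's remark that even $n$ requires the regularity hypotheses of Proposition \ref{res:regular_centrally_reflective}, while the paper's version is more elementary and self-contained. Your explicit exclusion of degenerate initial data (trajectory lines through $O$ or through a vertex, a proper closed subset) is also sound, and in fact slightly more careful than the paper, which asserts periodicity for every pair $M_0\neq M_1$.
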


The results are proved in the following order: we prove Proposition \ref{res:spherical_3_reflective} in Section \ref{sec:n_3}, Proposition \ref{res:quadrilateral_4_reflective} in Section \ref{sec:n_4} (and present another example of $4$-reflective billiard derived from the right-spherical model), Proposition \ref{res:regular_centrally_reflective} in Section \ref{sec:n_even}, and Proposition \ref{res:centrally_2n_reflective} in Section \ref{sec:centrally_proj_poly}.

Knowing these examples, many questions arise: are there polygonal examples of $n$-reflective projective billiards with $n$ odd? Can we find centrally-projective billiards which are $n$-reflective but not built on polygons? Another more difficult question would be: what are the $n$-reflective projective billiards (with the requirement that the boundary has a certain class of smoothness)? Answering this question would be a great step in solving Ivrii's conjecture. In \cite{fierobe_projective_3reflective}, we show that the only $3$-reflective projective billiard table with analytic boundary is the right-spherical billiard, as described in example described in Section \ref{sec:n_3}. 

We would like to conclude this introduction by saying that you can find, on the webpage of the author (which was \cite{fierobe_simulation_projective_billiards} at the time of writing), a simulation of projective billiards in polygons in which you can visualize their dynamics.

\section{$3$-reflectivity of the right-spherical billiard}
\label{sec:n_3}

In this section, we prove Proposition \ref{res:spherical_3_reflective}: the right-spherical billiard is $3$-reflective. We first introduce some notations:

Let $P_0$, $P_1$, $P_2$ be three points not on the same line. For $i=0,1,2$, let $\gamma_i$ be the line $P_{i}P_{i+1}$. For any $M\in\gamma_i$, let $L_i(M)$ be the line $MP_{i+2}$ ($i$ is seen modulo $3$), that is $L_i(M)$ is line joining $M$ and the only point $P_j$ which do lie on $\gamma_i$. The projective billiard table defined by the $\gamma_i$ and the $L_i$ is the right-spherical billiard $\mathcal{S}(P_0,P_1,P_2)$ (see Figure \ref{fig:spherical_presentation}).

\begin{proposition}
\label{prop:3_reflective}
Any $(M_0,M_1)\in\gamma_0\times\gamma_1$ with $M_0\neq M_1$ determines a $3$-periodic orbit inside the right-spherical billiard $\mathcal{S}(P_0,P_1,P_2)$.
\end{proposition}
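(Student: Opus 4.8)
The plan is to reduce the reflection law to a single algebraic relation among edge-coordinates by placing the triangle in a well-chosen projective frame. First I would take projective coordinates on $\rp^2$ for which $P_0 = [1:0:0]$, $P_1 = [0:1:0]$, $P_2 = [0:0:1]$ are the fundamental points, so that $\gamma_0 = \{z=0\}$, $\gamma_1 = \{x=0\}$, $\gamma_2 = \{y=0\}$ and the opposite-vertex line fields become explicit. In this frame I parametrize the three edges by one scalar each, writing $M_0 = [1:u:0]$ on $\gamma_0$, $M_1 = [0:1:v]$ on $\gamma_1$, and $M_2 = [w:0:1]$ on $\gamma_2$, so that a point on an edge is encoded by a single coordinate while the vertices correspond to the values $0$ and $\infty$.

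Next I would carry out the reflection at $M_1$ directly in the pencil of lines through $M_1$. The four relevant lines are the incoming line $M_0M_1$, the outgoing line $M_1M_2$, the transverse line $L_1(M_1) = M_1P_0$, and the edge $\gamma_1 = T_{M_1}\gamma_1$. Computing each as the cross product of the two points it contains and expanding them in the pencil basis $\{\gamma_1, L_1\}$, one reads off the affine pencil-parameters of the four lines: the line $M_0M_1$ acquires parameter $-uv$ and $M_1M_2$ acquires parameter $1/w$, while $L_1$ and $\gamma_1$ sit at $0$ and $\infty$. The harmonic condition, taken in the order (incoming, outgoing, transverse, tangent) prescribed by the projective reflection law, then collapses to the single equation
\begin{equation*}
-uvw = -1, \qquad \text{i.e.} \qquad uvw = 1,
\end{equation*}
so the reflected point $M_2$ on $\gamma_2$ is the unique one with coordinate $w = 1/(uv)$.

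The configuration is invariant under the cyclic coordinate shift $x\mapsto y\mapsto z\mapsto x$, which permutes $(P_0,P_1,P_2)$ and the edges cyclically. I would invoke this symmetry (or simply repeat the identical computation once) to conclude that every reflection imposes the same relation, namely that the product of the three consecutive edge-coordinates equals $1$. Starting from an arbitrary pair $(M_0,M_1)$ with $M_0\neq M_1$, the reflection at $M_1$ produces $M_2$ with $uvw=1$; the reflection at $M_2$ then returns a point $M_3\in\gamma_0$ whose coordinate $u'$ satisfies $vwu'=1$, forcing $u'=u$ and hence $M_3=M_0$; one further reflection at $M_0$ gives $M_4\in\gamma_1$ with $wuv'=1$, forcing $v'=v$ and $M_4=M_1$. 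Therefore the orbit closes as the triangle $M_0M_1M_2$ and is $3$-periodic, which is exactly the claim.

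The bookkeeping is elementary, so the only genuine care lies in the two conventions that fix the sign: the identification of $L_i(M)$ as the line joining $M$ to the opposite vertex, and the ordering of the four lines inside the harmonic cross-ratio. I would double-check that the cross-ratio is read as $(\ell,\ell';L,T_{M_1}\gamma_1)=-1$ and that passing to pencil parameters introduces no spurious transposition, handling the limiting factor at $\gamma_1$ (parameter $\infty$) so that $uvw=1$ results rather than its reciprocal. Beyond that, one should note in passing that the construction is well-defined and nondegenerate precisely when $M_0\neq M_1$: the reflected line is then distinct from $\gamma_2$, so $M_2$ is a genuine point, and the coordinates $u,v,w$ stay away from the degenerate values $0$ and $\infty$ attached to the vertices.
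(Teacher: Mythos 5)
Your proof is correct, but it follows a genuinely different route from the paper's. The paper argues synthetically: it defines $M_2$ by reflecting at $M_1$ and a candidate $M_2'$ by reflecting at $M_0$, then intersects both harmonic pencils with the line $\gamma_2$ to get two harmonic quadruples of \emph{points} $(A,M_2,P_2,P_0)$ and $(A,M_2',P_2,P_0)$, forcing $M_2=M_2'$; the reflection law at $M_2$ is then verified by intersecting the pencil at $M_2$ with $\gamma_0$ and recognizing the harmonic quadruple $(B,M_0,P_0,P_1)$ already produced by the reflection at $M_1$. Its key tool is thus the invariance of cross-ratio under projection of a pencil onto a transversal line, with no coordinates at all. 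You instead fix the standard frame $P_0=[1:0:0]$, $P_1=[0:1:0]$, $P_2=[0:0:1]$, reduce each reflection to the single algebraic relation $uvw=1$ in pencil parameters (your computation checks out: with $M_0M_1=(uv,-v,1)=uv\,\gamma_1 - L_1$ and $M_1M_2=(1,vw,-w)=\gamma_1+w\,L_1$, the cross-ratio in the prescribed order is $-uvw$), and close the orbit by the cyclic projective symmetry $[x:y:z]\mapsto[z:x:y]$, which indeed preserves the edges, the line fields, and your parametrizations. What your approach buys is an explicit formula for the billiard map ($w=1/(uv)$), which makes periodicity mechanical and reduces three harmonicity checks to one; what the paper's approach buys is coordinate-free geometry that transfers directly to the $4$-reflective quadrilateral argument in the next section, where no such global cyclic symmetry is available. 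One small caveat: your closing remark that $M_0\neq M_1$ keeps $u,v,w$ away from $0$ and $\infty$ is not quite right --- that hypothesis only excludes $M_0=M_1=P_1$, not, say, $M_0=P_0$ (i.e.\ $u=0$), where the relation $uvw=1$ degenerates; but this genericity issue is equally implicit in the paper's proof, and an open dense set of initial conditions is all that reflectivity requires.
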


\begin{proof}
This proof was found by Simon Allais in a talk we had about harmonicity in a projective space. Let $M_2\in\gamma_2$ be such that $M_0M_1$, $M_1M_2$, $\gamma_1$, $L_1(M_1)$ are harmonic lines. Define $M_2'\in\gamma_2$ similarly: $M_0M_1$, $M_0M_2'$, $\gamma_0$, $L_0(M_0)$ are harmonic lines.

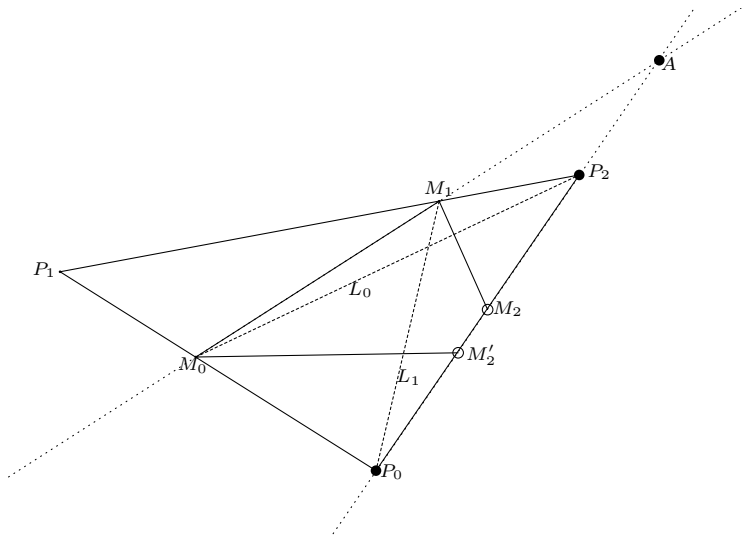
\begin{figure}[!ht]
\centering
\begin{tikzpicture}[line cap=round,line join=round,>=triangle 45,x=1.4cm,y=1.4cm]
\clip(-3.5,-2.5) rectangle (3.5,2.5);
\draw (-3,0)-- (0,-1.89);
\draw (0,-1.89)-- (1.93,0.92);
\draw (1.93,0.92)-- (-3,0);
\draw (-1.71,-0.81)-- (0.6,0.67);
\draw (0.6,0.67)-- (1.06,-0.36);
\draw [line width=0.4pt,dash pattern=on 1pt off 1pt] (-1.71,-0.81)-- (1.93,0.92);
\draw [line width=0.4pt,dash pattern=on 1pt off 1pt] (0.6,0.67)-- (0,-1.89);
\draw [dotted,domain=-4.91:4.59] plot(\x,{(--0.66--1.48*\x)/2.31});
\draw [dotted,domain=-4.91:4.59] plot(\x,{(--3.66-2.81*\x)/-1.93});
\draw (-1.71,-0.81)-- (0.78,-0.77);
\begin{scriptsize}
\fill [color=black] (-3,0) circle (0.5pt);
\draw[color=black] (-3.15,0.02) node {$P_1$};
\fill [color=black] (0,-1.89) circle (2pt);
\draw[color=black] (0.15,-1.9) node {$P_0$};
\fill [color=black] (1.93,0.92) circle (2pt);
\draw[color=black] (2.12,0.95) node {$P_2$};
\fill [color=black] (-1.71,-0.81) circle (0.5pt);
\draw[color=black] (-1.74,-0.89) node {$M_0$};
\fill [color=black] (0.6,0.67) circle (0.5pt);
\draw[color=black] (0.6,0.79) node {$M_1$};
\draw [color=black] (1.06,-0.36) circle (2pt);
\draw[color=black] (1.25,-0.35) node {$M_2$};
\draw[color=black] (-0.15,-0.17) node {$L_0$};
\draw[color=black] (0.31,-1) node {$L_1$};
\fill [color=black] (2.69,2.01) circle (2pt);
\draw[color=black] (2.78,1.98) node {$A$};
\draw [color=black] (0.78,-0.77) circle (2pt);
\draw[color=black] (1,-0.79) node {$M_2'$};
\end{scriptsize}
\end{tikzpicture}
\caption{As in the proof of Proposition \ref{prop:3_reflective}, both quadruples of points $(M_2,A,P_0,P_2)$ and $(M_2',A,P_0,P_2)$ are harmonic, hence necessarily $M_2=M_2'$.}
\label{fig:3_reflective_proof1}
\end{figure}

Let us first show that necessarily $M_2=M_2'$ (see Figure \ref{fig:3_reflective_proof1}). Consider the line $\gamma_2$ and let $A$ be its point of intersection with $M_0M_1$. Let us consider harmonic quadruples of points on $\gamma_2$. By harmonicity of the previous defined lines passing through $M_1$, the quadruple of points $(A,M_2,P_2,P_0)$ is harmonic. Doing the same with the lines passing through $M_0$, the quadruple of points $(A,M_2',P_2,P_0)$ is harmonic. Hence $M_2=M_2'$ since the projective transformation defining the cross-ratio is one to one.

Now let us prove that the lines $M_1M_2$, $M_0M_2$, $\gamma_2$, $L_2(M_2)$ are harmonic lines. Consider the line $\gamma_0$: $M_1M_2$ intersects it at a certain point denoted by $B$, $M_2M_0$ at $M_0$, $\gamma_2$ at $P_0$ and $L_2(M_2)$ at $P_1$. But the quadruple of points $(B,M_0,P_0,P_1)$ is harmonic since there is a reflection law at $M_1$ whose lines intersect $\gamma_0$ exactly in those points.
\end{proof}

%\begin{corollary}
%The spherical billiard is $3$-reflective.
%\end{corollary}

%\begin{remark}
%There is a way to construct the spherical billiard table by projecting a usual billiard table of the sphere to the plane. Note that the corresponding billiard on the sphere is $3$-reflective (in the usual sense).
%\end{remark}

\section{Two $4$-reflective projective billiards}
\label{sec:n_4}

In this section we construct two classes of $4$-reflective projective billiards: the first one is obtained by gluing together two right-spherical billiards, the second one is obtained from a centrally-projective quadrilateral whose origin is the intersection point of its diagonals.

\subsection{The converging mirrors}

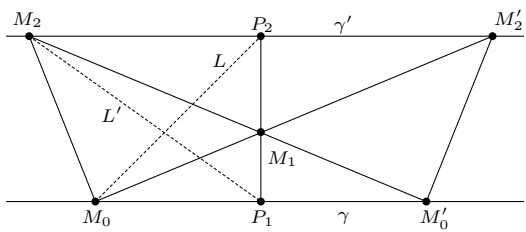
\begin{figure}[h!]
\centering
\begin{tikzpicture}[line cap=round,line join=round,>=triangle 45,x=2.2cm,y=2.2cm]
\clip(-1.54,-0.32) rectangle (1.59,1.41);
\draw [domain=-1.54:1.59] plot(\x,{(--1-0*\x)/1});
\draw [domain=-1.54:1.59] plot(\x,{(-0-0*\x)/1.5});
\draw (0,1)-- (0,0);
\draw (-1,0)-- (-1.4,1);
\draw (-1.4,1)-- (1,0);
\draw (1,0)-- (1.4,1);
\draw (1.4,1)-- (-1,0);
\draw [dash pattern=on 1pt off 1pt] (-1.4,1)-- (0,0);
\draw [dash pattern=on 1pt off 1pt] (-1,0)-- (0,1);
%\draw [dash pattern=on 1pt off 1pt] (0,1)-- (1,0);
%\draw [dash pattern=on 1pt off 1pt] (1.4,1)-- (0,0);
%\draw [dash pattern=on 1pt off 1pt,domain=-1.54:1.59] plot(\x,{(--1.5-0*\x)/3.6});
\begin{scriptsize}
\fill [color=black] (0,1) circle (1.5pt);
\draw[color=black] (0.5,1.07) node {$\gamma'$};
\draw[color=black] (0.5,-0.1) node {$\gamma$};
\draw[color=black] (0.01,1.07) node {$P_2$};
\fill [color=black] (0,0) circle (1.5pt);
\draw[color=black] (0.01,-0.1) node {$P_1$};
\fill [color=black] (-1,0) circle (1.5pt);
\draw[color=black] (-0.99,-0.1) node {$M_0$};
\fill [color=black] (-1.4,1) circle (1.5pt);
\draw[color=black] (-1.41,1.1) node {$M_2$};
\fill [color=black] (1,0) circle (1.5pt);
\draw[color=black] (1.05,-0.1) node {$M_0'$};
\fill [color=black] (1.4,1) circle (1.5pt);
\draw[color=black] (1.5,1.1) node {$M_2'$};
\fill [color=black] (0,0.42) circle (1.5pt);
\draw[color=black] (0.13,0.26) node {$M_1$};
\draw[color=black] (-0.9,0.53) node {$L'$};
\draw[color=black] (-0.25,0.85) node {$L$};
%\draw[color=black] (-1.36,0.49) node {$L_1$};
\end{scriptsize}
\end{tikzpicture}
\caption{billiard deduced from the right-spherical billiard $\mathcal{S}(P_0,P_1,P_2)$, where the point $P_0$ is at infinity, hence the lines $P_0P_1$, $P_0P_2$ and all tranverse lines $L_1(M)$ on $P_1P_2$ are parallel, making the reflection law on $P_1P_2$ as the usual one.}
\label{fig:4_deduced}
\end{figure}

In $\real^2$, consider two distinct parallel lines $\gamma$ and $\gamma'$. Choose a normal line intersecting $\gamma$ at a point $P_1$ and $\gamma'$ at a point $P_2$. On $\gamma$, define a field of transverse lines by taking the lines $L(M)$ going from a point $M\in\gamma$ to $P_2$. Do the same on $\gamma'$ with $P_1$ to form the field of lines $L'$. Consider the projective billiard dynamics made by a particle bouncing alternatively $\gamma$ and $\gamma'$ with their respective fields of transverse lines. Call this construction the \textit{converging mirrors}. We claim that

\begin{proposition}
The converging mirrors are $4$-reflective.
\end{proposition}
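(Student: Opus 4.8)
The plan is to linearize the projective reflection law by choosing coordinates adapted to the two parallel mirrors and tracking the \emph{reciprocal slope} of rays. First I would normalize: place $\gamma$ on the $x$-axis and $\gamma'$ on the line $y=1$, with $P_1=(0,0)$ and $P_2=(0,1)$, so that the common normal $P_1P_2$ is the $y$-axis. I parametrize the direction of a (non-horizontal) line by its reciprocal slope $u=dx/dy$; this quantity is constant along a straight ray, a ray from $(x,y)$ reaches the level $y'$ at abscissa $x+u(y'-y)$, and the horizontal direction shared by the two mirrors corresponds to $u=\infty$.

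The key step is a linearization lemma for the reflection. Since the tangent to each mirror is horizontal, at every bounce the mirror direction is $u_T=\infty$. Using that the cross-ratio of four concurrent lines equals the cross-ratio of their directions (here, of their reciprocal slopes), the harmonic condition $(\ell,\ell';L,\gamma)=-1$ defining the outgoing ray becomes the affine involution
\[
u_{\mathrm{out}} = 2u_L - u_{\mathrm{in}},
\]
where $u_L$ is the reciprocal slope of the transverse line $L$ at the bounce point. A direct computation of $u_L$ then gives: on $\gamma$ at $(a,0)$ the transverse line is $(a,0)P_2$, with $u_L=-a$; on $\gamma'$ at $(b,1)$ the transverse line is $(b,1)P_1$, with $u_L=b$. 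Thus the reflection at $(a,0)\in\gamma$ is $u\mapsto -2a-u$ and the reflection at $(b,1)\in\gamma'$ is $u\mapsto 2b-u$.

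With this in hand I would run the orbit two bounces. Starting from $M_0=(x_0,0)$ and $M_1=(x_1,1)$, the segment $M_0M_1$ has reciprocal slope $x_1-x_0$; reflecting at $M_1$, travelling back to $\gamma$, reflecting at the resulting point, and travelling to $\gamma'$, one finds $M_2=(-x_0,0)$ and $M_3=(-x_1,1)$. In other words the square of the (two-mirror) billiard map $B$ satisfies $B^2(M_0,M_1)=(\sigma M_0,\sigma M_1)$, where $\sigma\colon(x,y)\mapsto(-x,y)$ is the reflection across the normal $P_1P_2$. This $\sigma$ is a symmetry of the whole configuration (it fixes $\gamma,\gamma',P_1,P_2$ and carries each transverse field to itself), so the induced map $\Sigma\colon(M,M')\mapsto(\sigma M,\sigma M')$ is an involution; hence $B^4=(B^2)^2=\Sigma^2=\mathrm{id}$. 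Therefore every orbit closes up after four bounces. Since the recurrence keeps all reciprocal slopes finite (no ray ever runs parallel to the mirrors and escapes), this holds for \emph{all} $(M_0,M_1)\in\gamma\times\gamma'$, and the orbit is genuinely $4$-periodic rather than $2$-periodic because $M_2=(-x_0,0)\neq M_0$ away from the normal. This proves $4$-reflectivity.

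The step I expect to require the most care is the linearization lemma: justifying cleanly that the harmonic reflection law turns into the affine involution $u\mapsto 2u_L-u$ once we pass to reciprocal slopes, including the bookkeeping of the direction at infinity $u_T=\infty$ (the mirror tangent) inside the cross-ratio. Once that identity is established the remainder is a short affine computation, and the symmetry $\sigma$ makes the closing-up transparent without having to verify the fourth bounce by hand.
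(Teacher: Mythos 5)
Your proof is correct, but it takes a genuinely different route from the paper's. The paper treats the converging mirrors as the right-spherical billiard $\mathcal{S}(P_0,P_1,P_2)$ with $P_0$ the point at infinity common to $\gamma$ and $\gamma'$: it takes a triangular orbit $(M_0,M_1,M_2)$ guaranteed by Proposition \ref{res:spherical_3_reflective}, reflects it across $P_1P_2$ (which preserves $\gamma$, $\gamma'$, $P_0$ and hence the whole configuration) to get a second triangular orbit $(M_0',M_1,M_2')$, and uses the fact that the transverse field on $P_1P_2$ consists of horizontal lines --- so the law at $M_1$ is the usual one --- to conclude that $M_0,M_1,M_2'$ and $M_2,M_1,M_0'$ are collinear; gluing the two triangles along these lines produces the quadrilateral orbit $M_0,M_2',M_0',M_2$. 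You instead work entirely inside the two-mirror system: you linearize the harmonic reflection law in the reciprocal-slope coordinate (correctly --- with the tangent at $u_T=\infty$, the harmonic condition $(u_{\mathrm{in}},u_{\mathrm{out}};u_L,\infty)=-1$ is exactly $u_{\mathrm{in}}+u_{\mathrm{out}}=2u_L$), compute the two-bounce map explicitly, and identify it as the axial symmetry $\sigma$ across $P_1P_2$, whence $B^4=\mathrm{id}$. The same symmetry $\sigma$ is the engine of both arguments, but deployed differently: the paper uses it to pair up $3$-periodic orbits of an auxiliary billiard (no coordinates needed, and it exhibits the ``gluing'' of two right-spherical billiards that motivates the section), while your computation is self-contained --- it never invokes the $3$-reflectivity result --- and yields the orbits in closed form, making the full $2$-parameter family and the non-degeneracy ($M_2\neq M_0$ away from the common normal) completely explicit.
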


\begin{proof}
Complete $\real^2$ with a line at infinity $\ell$ to form the projective plane. Consider the point $P_0\in\ell$ where $\gamma$ and $\gamma'$ intersect. Choose any orbit $(M_0,M_1,M_2)$ of the spherical billiard $\mathcal{S}(P_0,P_1,P_2)$, where $M_0\in \gamma$, $M_1\in P_1P_2$ and $M_2\in\gamma'$ (see Figure \ref{fig:4_deduced}). Now consider the axial reflection with respect to the line $P_1P_2$. It leaves $P_1P_2$, $\gamma$, $\gamma'$ and $P_0$ invariant, hence the orbit $(M_0,M_1,M_2)$ is transformed into another orbit $(M_0',M_1,M_2')$ of $\mathcal{S}(P_0,P_1,P_2)$, where $M_0'\in \gamma$ and $M_2'\in\gamma'$.

But since the transverse lines on the side $P_1P_2$ of $\mathcal{S}(P_0,P_1,P_2)$ are orthogonal to the line $P_1P_2$ (they pass through $P_0$, as well as $\gamma$ and $\gamma'$), the reflection law on $P_1P_2$ is the usual one: the lines $M_0M_1$ and $M_1M_2$ make the same angle with $P_1P_2$. Hence $M_0,M_1,M_2'$ are on the same line, and so are $M_2,M_1,M_0'$. Therefore, $M_0,M_2',M_0',M_2$ is a $4$-periodic orbit. 
\end{proof}
%
%In $\cmplx^3$, consider the orthogonal symmetry with respect to the hyperplane of equation $y=0$: it is well defined in $\projspace$, leaves the lines $P_1P_2$, $P_0P_1$ and $P_0P_2$ invariant and permutes the lines $M_1M_0$ and $M_1M_2$. Let $M_2'$ be the intersection of $M_2M_1$ with $P_0P_1$, and $M_0'$ be the intersection of $M_0M_1$ with $P_0P_2$. By symmetry, $(M_2',M_1,M_0')$ is again a periodic orbit of $\mathcal{S}(P_0,P_1,P_2)$.
%
%Now forget the line $P_1P_2$, and consider only the billiard table constitued of $\gamma_0'=P_0P_1$, $\gamma_1'=P_0P_2$ with their previous fields of transverse lines $L_0'$ and $L_1'$ (passing through $P_2$ and $P_1$ respectively). We just showed that $(M_0,M_0',M_2',M_2)$ is a $4$-periodic billiard orbit of this projective billiard table.

\subsection{The centrally-projective quadrilateral}

In this subsection, we prove Proposition \ref{res:quadrilateral_4_reflective}. We first introduce some notations:

Let $P_0$, $P_1$, $P_2$, $P_3$ be four points, no three of them being on the same line. For $i=0\ldots 3$ (seen modulo $4$), let $\gamma_i$ be the line $P_{i}P_{i+1}$. Write $O$ to be the point of intersection of the lines $P_0P_2$ and $P_1P_3$ (diagonals). For any $M\in\gamma_i$, let $L_i(M)$ be the line $OM$. The projective billiard table defined by the $\gamma_i$ and the $L_i$ is what we call the \textit{centrally-projective quadrilateral} $\cppol(O;P_0,P_1,P_2,P_3)$ (see Figure \ref{fig:quadrilateral_billiard}).

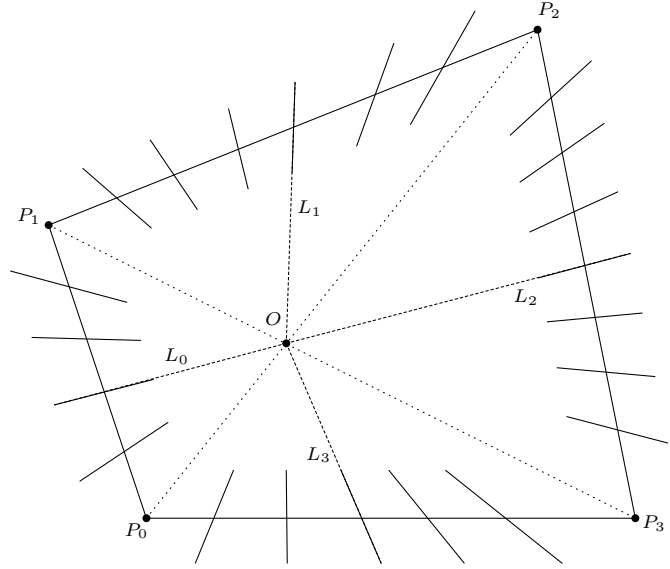
\begin{figure}[!h]
\centering
\begin{tikzpicture}[line cap=round,line join=round,>=triangle 45,x=1.3cm,y=1.3cm]
\clip(-3.7,-2.5) rectangle (3.7,3.5);
\draw (-2,-2)-- (-3,1);
\draw (-3,1)-- (2,3);
\draw (2,3)-- (3,-2);
\draw (-2,-2)-- (3,-2);
\draw [dotted] (-2,-2)-- (2,3);
\draw [dotted] (-3,1)-- (3,-2);
\draw [dash pattern=on 1pt off 1pt] (-0.57,-0.21)-- (-2.94,-0.84);
\draw [dash pattern=on 1pt off 1pt] (-0.57,-0.21)-- (-0.48,2.46);
\draw [dash pattern=on 1pt off 1pt] (-0.57,-0.21)-- (2.95,0.71);
\draw [dash pattern=on 1pt off 1pt] (-0.57,-0.21)-- (0.4,-2.46);
\draw (-3.39,0.53)-- (-2.2,0.21);
\draw (-3.17,-0.15)-- (-2.06,-0.18);
\draw (-2.94,-0.84)-- (-1.93,-0.58);
\draw (-2.68,-1.62)-- (-1.78,-1.02);
\draw (-1.5,-2.46)-- (-1.11,-1.51);
\draw (-0.57,-1.51)-- (-0.56,-2.46);
\draw (-0.01,-1.51)-- (0.4,-2.46);
\draw (1.25,-2.46)-- (0.48,-1.51);
\draw (1.06,-1.51)-- (2.25,-2.46);
\draw (3.33,-1.23)-- (2.3,-0.96);
\draw (3.2,-0.55)-- (2.2,-0.46);
\draw (3.07,0.1)-- (2.1,0.01);
\draw (2.01,0.46)-- (2.95,0.71);
\draw (2.82,1.34)-- (1.92,0.93);
\draw (1.82,1.44)-- (2.68,2.04);
\draw (2.55,2.68)-- (1.72,1.92);
\draw (1.36,3.19)-- (0.7,2.03);
\draw (0.53,2.86)-- (0.15,1.81);
\draw (-0.48,2.46)-- (-0.51,1.54);
\draw (-0.96,1.37)-- (-1.16,2.19);
\draw (-1.96,1.87)-- (-1.48,1.16);
\draw (-1.95,0.97)-- (-2.65,1.58);
\begin{scriptsize}
\fill [color=black] (-2,-2) circle (1.5pt);
\draw[color=black] (-2.1,-2.13) node {$P_0$};
\fill [color=black] (-3,1) circle (1.5pt);
\draw[color=black] (-3.2,1.09) node {$P_1$};
\fill [color=black] (2,3) circle (1.5pt);
\draw[color=black] (2.12,3.2) node {$P_2$};
\fill [color=black] (3,-2) circle (1.5pt);
\draw[color=black] (3.19,-2.05) node {$P_3$};
\fill [color=black] (-0.57,-0.21) circle (1.5pt);
\draw[color=black] (-0.7,0.04) node {$O$};
\draw[color=black] (-1.69,-0.34) node {$L_0$};
\draw[color=black] (-0.33,1.17) node {$L_1$};
\draw[color=black] (1.88,0.26) node {$L_2$};
\draw[color=black] (-0.24,-1.35) node {$L_3$};
\end{scriptsize}
\end{tikzpicture}
\caption{The centrally-projective quadrilateral $\cppol(O;P_0,P_1,P_2,P_3)$ with each one of its fields of transverse lines}
\label{fig:quadrilateral_billiard}
\end{figure}

\begin{proposition}
\label{prop:4_reflective}
Any $(M_0,M_1)\in\gamma_0\times\gamma_1$ with $M_0\neq M_1$ determines a $4$-periodic orbit of the centrally-projective quadrilateral $\cppol(O;P_0,P_1,P_2,P_3)$.
\end{proposition}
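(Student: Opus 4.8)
The plan is to work in homogeneous coordinates on the projective plane and to parametrise each edge by the pencil of transverse lines through $O$. Since no three of the $P_i$ are collinear, I may normalise $P_0=[1:0:0]$, $P_1=[0:1:0]$, $P_2=[0:0:1]$, $P_3=[1:1:1]$; then $\gamma_0=\{z=0\}$, $\gamma_1=\{x=0\}$, $\gamma_2=\{x=y\}$, $\gamma_3=\{y=z\}$, the diagonals are $P_0P_2=\{y=0\}$ and $P_1P_3=\{x=z\}$, and $O=[1:0:1]$. Every line through $O$ has the form $\ell_\mu\colon (x-z)+\mu y=0$, and $\ell_\mu$ meets $\gamma_0,\gamma_1,\gamma_2,\gamma_3$ in the single points $[-\mu:1:0]$, $[0:1:\mu]$, $[1:1:1+\mu]$, $[1-\mu:1:1]$. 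I will therefore encode the orbit point $M_i\in\gamma_i$ by the scalar $\mu_i$ with $M_i\in\ell_{\mu_i}$.

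The conceptual engine is that, read off on a fixed diagonal, each projective reflection collapses to a single harmonic (involutive) condition. I take $d=\{y=0\}$ as transversal. Because $O\in d$ and each edge $\gamma_i$ meets $d$ in a vertex ($P_0$ for $i=0,3$ and $P_2$ for $i=1,2$), the four concurrent lines at $M_i$ — incoming, outgoing, $\gamma_i$, $OM_i$ — cut $d$ in four points, two of which are $O$ and a vertex. Thus the reflection at $M_i$ says exactly that the traces on $d$ of the incoming and outgoing segments are harmonic conjugates with respect to $\{O,P_2\}$ (for $i=1,2$) or $\{O,P_0\}$ (for $i=0,3$); equivalently they are swapped by the corresponding harmonic involution of $d$. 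Evaluating these cross-ratios through the parametrisation gives explicit relations between consecutive parameters, the first being $\mu_2=\mu_1+1-\mu_1/\mu_0$ (reflection at $M_1$), with similar rational expressions at $M_2$ and $M_3$; these I would obtain by routine cross-ratio computation.

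Next I propagate: from $(M_0,M_1)$ I define $M_2,M_3,M_4\in\gamma_0$ by these relations and check that the composition telescopes. The intermediate identities $1+\mu_1-\mu_2=\mu_1/\mu_0$ and $\mu_3=\mu_0\mu_2/\mu_1$ (equivalently $\mu_0\mu_2=\mu_1\mu_3$) reduce the bookkeeping, and a one-line substitution gives $\mu_4=\mu_0$, i.e.\ $M_4=M_0$. To upgrade this to genuine $4$-periodicity I must still check that the law at $M_0$ sends $M_3M_0$ to $M_0M_1$. This is immediate from the involution picture: on $d$ the trace of $M_3M_0$ is the image of the trace of $M_0M_1$ under the harmonic involution attached to $\{O,P_0\}$, and an involution makes two such points harmonic conjugates, which is exactly the required reflection law; a direct cross-ratio evaluation confirms the value $-1$. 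Alternatively, the cyclic projective symmetry $g$ with $g(P_i)=P_{i+1}$ — which fixes $O$ since it interchanges the two diagonals, hence maps the centrally-projective quadrilateral to itself — lets me transport $M_4=M_0$ into $M_5=M_1$. Either way $M_4M_5=M_0M_1$, and the orbit is $4$-periodic.

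The main obstacle I anticipate is organisational rather than conceptual: keeping the three cross-ratio relations consistent so that the telescoping to $\mu_4=\mu_0$ really occurs, and recognising that $M_4=M_0$ by itself is weaker than $4$-periodicity — one genuinely needs the closure check at $M_0$ (or the symmetry $g$) to guarantee $M_5=M_1$. A minor point to dispatch is that the computation runs over generic parameters; the finitely many degenerate configurations (an orbit point landing on a vertex, or a parameter equal to $0$ or $\infty$) are then covered by continuity, the reflection map being rational in $(M_0,M_1)$.
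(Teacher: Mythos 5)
Your proposal is correct --- in your normalisation the relations $\mu_2=1+\mu_1-\mu_1/\mu_0$ and $\mu_0\mu_2=\mu_1\mu_3$ do hold, they telescope to $\mu_4=\mu_0$, and your closure argument at $M_0$ is sound --- but it executes the common key idea along a genuinely different route than the paper. Both proofs rest on the same mechanism: each diagonal contains $O$ and two opposite vertices, so the reflection law at $M_i$, read through traces on that diagonal, says exactly that the traces of the incoming and outgoing lines are harmonic conjugates with respect to $O$ and a vertex, i.e.\ are exchanged by an involution of the diagonal; hence two consecutive reflections whose edges meet at a common vertex compose to the identity there. The paper runs this synthetically on \emph{both} diagonals, with no coordinates: the side reflected at $M_0$ must pass through the trace $A$ of $M_1M_2$ on $P_1P_3$, the side $M_2M_3$ must pass through the trace $B$ of $M_0M_1$ on $P_0P_2$, and then the line reflected at $M_3$ is forced through $A$ (reflection at $M_2$, read on $P_1P_3$) and through $D'$ (reflection at $M_0$, read on $P_0P_2$), so it coincides with $M_0M_3'=AD'$; since a line is pinned by its two traces, no separate closure step is needed. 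You instead work on a single diagonal with the pencil parametrisation $\mu_i$, which turns each reflection into a rational recursion and periodicity into a telescoping identity, at the cost of the residual obligation you correctly flag ($M_4=M_0$ alone is weaker than $4$-periodicity) and of a degenerate-parameter discussion; your two ways of discharging the closure (the involution on the traces, or the cyclic projective symmetry $g$ fixing $O$) are both valid. The paper's route buys a computation-free incidence argument with closure built in; yours buys mechanical verifiability, and the parametrisation of boundary points by their line through $O$ is close in spirit to the duality the paper exploits later in Section \ref{sec:centrally_proj_poly}.
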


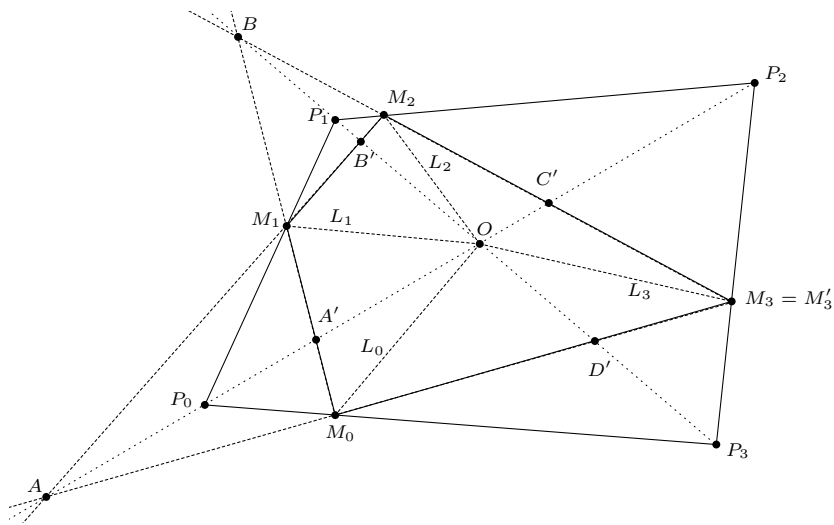
\begin{figure}[!b]
\centering
\begin{tikzpicture}[line cap=round,line join=round,>=triangle 45,x=1.7cm,y=1.7cm]
\clip(-3.8,-1.2) rectangle (3.5,2.8);
\draw (-2.27,-0.28)-- (-1.25,1.95);
\draw (-1.25,1.95)-- (2.03,2.24);
\draw (2.03,2.24)-- (1.73,-0.59);
\draw (1.73,-0.59)-- (-2.27,-0.28);
\draw [dash pattern=on 1pt off 1pt,domain=-4.36118736911263:-1.2546504345620575] plot(\x,{(--1.99--1.48*\x)/-0.38});
\draw [dash pattern=on 1pt off 1pt,domain=-4.36118736911263:-0.8682629057033406] plot(\x,{(-2.26-0.86*\x)/-0.76});
\draw [dash pattern=on 1pt off 1pt,domain=-4.36118736911263:1.850326030682974] plot(\x,{(--0.01-0.88*\x)/-3.1});
\draw [dash pattern=on 1pt off 1pt,domain=-4.36118736911263:1.850326030682974] plot(\x,{(-4.13--1.46*\x)/-2.72});
\draw [dotted,domain=-4.36118736911263:1.732822661578601] plot(\x,{(-2.64--2.55*\x)/-2.98});
\draw [dotted,domain=-4.36118736911263:2.030297933160431] plot(\x,{(-4.53-2.52*\x)/-4.3});
\draw (-0.87,1.99)-- (1.85,0.53);
\draw (1.85,0.53)-- (-1.25,-0.36);
\draw (-1.63,1.12)-- (-1.25,-0.36);
\draw (-1.63,1.12)-- (-0.87,1.99);
\draw [dash pattern=on 1pt off 1pt] (-0.12,0.98)-- (-1.63,1.12);
\draw [dash pattern=on 1pt off 1pt] (-0.12,0.98)-- (-0.87,1.99);
\draw [dash pattern=on 1pt off 1pt] (-0.12,0.98)-- (-1.25,-0.36);
\draw [dash pattern=on 1pt off 1pt] (-0.12,0.98)-- (1.85,0.53);
\begin{scriptsize}
\fill [color=black] (-2.27,-0.28) circle (1.5pt);
\draw[color=black] (-2.45,-0.24) node {$P_0$};
\fill [color=black] (-1.25,1.95) circle (1.5pt);
\draw[color=black] (-1.38,1.98) node {$P_1$};
\fill [color=black] (2.03,2.24) circle (1.5pt);
\draw[color=black] (2.2,2.31) node {$P_2$};
\fill [color=black] (1.73,-0.59) circle (1.5pt);
\draw[color=black] (1.9,-0.64) node {$P_3$};
\fill [color=black] (-0.12,0.98) circle (1.5pt);
\draw[color=black] (-0.08,1.1) node {$O$};
\fill [color=black] (-1.25,-0.36) circle (1.5pt);
\draw[color=black] (-1.21,-0.5) node {$M_0$};
\fill [color=black] (-1.63,1.12) circle (1.5pt);
\draw[color=black] (-1.79,1.17) node {$M_1$};
\fill [color=black] (-0.87,1.99) circle (1.5pt);
\draw[color=black] (-0.73,2.12) node {$M_2$};
\fill [color=black] (-3.51,-1) circle (1.5pt);
\draw[color=black] (-3.6,-0.91) node {$A$};
\fill [color=black] (-2.01,2.6) circle (1.5pt);
\draw[color=black] (-1.92,2.7) node {$B$};
\fill [color=black] (1.85,0.53) circle (1.5pt);
\draw[color=black] (2.3,0.55) node {$M_3=M_3'$};
\fill [color=black] (0.42,1.3) circle (1.5pt);
\draw[color=black] (0.41,1.5) node {$C'$};
\fill [color=black] (0.78,0.22) circle (1.5pt);
\draw[color=black] (0.82,0.01) node {$D'$};
\fill [color=black] (-1.4,0.23) circle (1.5pt);
\draw[color=black] (-1.3,0.43) node {$A'$};
\fill [color=black] (-1.05,1.78) circle (1.5pt);
\draw[color=black] (-1.02,1.64) node {$B'$};
\draw[color=black] (-1.2,1.19) node {$L_1$};
\draw[color=black] (-0.43,1.61) node {$L_2$};
\draw[color=black] (-0.96,0.17) node {$L_0$};
\draw[color=black] (1.13,0.61) node {$L_3$};
\end{scriptsize}
\end{tikzpicture}
\caption{The centrally-projective quadrilateral $\cppol(O;P_0,P_1,P_2,P_3)$ with a periodic orbit obtained by reflecting $M_0M_1$ three times. Here the notations are the same as in the proof of Proposition \ref{prop:4_reflective}.}
\label{fig:quadrilateral_proof}
\end{figure}

\begin{proof}
Let $M_2\in\gamma_2$ such that $M_0M_1$ is reflected into $M_1M_2$ by the reflection law at $M_1$. Let $M_3\in\gamma_3$ such that $M_1M_2$ is reflected into $M_2M_3$ by the reflection law at $M_2$. Let $M_3'\in\gamma_3$ such that $M_0M_1$ is reflected into $M_0M_3'$ by the reflection law at $M_0$. Denote by $d$ the line reflected from $M_2M_3$ by the projective reflection law at $M_3$. We have to show that $d=M_0M_3'$. 

First, let us introduce a few notations (see Figure \ref{fig:quadrilateral_proof}). Consider the line $M_0M_1$; it intersects: the line $P_0P_2$ at a point $B$ and the line $P_1P_3$ at a point $A'$. Now consider the line $M_1M_2$; it intersects: the line $P_0P_2$ at a point $B'$ and the line $P_1P_3$ at a point $A$. Finally let $C'$ be the intersection point of $M_2M_3$ with $P_1P_3$ and $D'$ the intersection point of $M_0M_3'$ with $P_0P_2$.

Then, notice that by the projective law of reflection at $M_1$, the quadruple of points $(A, A',P_1,O)$ is harmonic. Since the points $P_1,A',O$ correpond to the lines $\gamma_0$, $M_0M_1$, $L_0(M_0)$, the previously defined reflected line $M_0M_3'$ needs to pass through $A$ in order to form a harmonic quadruple of lines. The same remark on the other diagonal leads to note that $M_2M_3$ passes through $B$.

Now by the reflection law at $M_2$, one observe that the quadruple of points $(A,C',O,P_3)$ is harmonic. But $\gamma_3$ passes through $P_3$, $M_3M_2$ through $C'$ and $L_3(M_3)$ through $O$. Hence $d$ needs to pass through $A$. Then, by the reflection law at $M_0$, one observe that the quadruple of points $(B,D',O,P_0)$ is harmonic. But $\gamma_3$ passes through $P_0$, $M_3M_2$ through $B$ and $L_3(M_3)$ through $O$. Hence $d$ needs to pass through $D'$.

Therefore we conclude that $d=AD'=M_0M_3'$.
\end{proof}

\begin{remark}
Notice that the spherical billiard cannot be deduced from a usual billiard on the sphere by the same construction as described for the right-spherical billiard.
\end{remark}

\section{centrally-projective regular polygons}
\label{sec:n_even}

In the following, we prove Proposition \ref{res:regular_centrally_reflective}. We first introduce some notations:

Let $n=2k$, $k\geq 2$, an even integer. Let $\mathcal{P}$ be an $n$-sided regular polygon of radius $1$, whose vertices are clockwise denoted by $P_0, P_1,\ldots,P_{n-1}$. In the following, each time we refer to the index $i$, we will consider it modulo $n$, that is we identify $i$ and its rest when divided by $n$. Define $O$ to be the intersection of the great diagonals of the polygon, which are the lines $P_iP_{i+k}$, $i=0,\ldots,k-1$. Construct a field of transverse lines $L_i$ on $\gamma_i:=P_iP_{i+1}$ by setting $L_i(M)=MO$ (the line joining the basepoint $M$ and $O$). The projective billiard table defined by the $\gamma_i$ and the $L_i$ is what we call the \textit{centrally-projective regular polygon} $\cppol(O;P_0,\ldots,P_{n-1})$, see Figure \ref{fig:polygon_presentation}.

Let $(M_0,M_1)\in P_0P_1\times P_1P_2$ with $M_0\neq M_1$. We want to show that the orbit $(M_m)_{m\in\integer}$ of $(M_0,M_1)$ is $n$-periodic. We first prove the

\begin{figure}[!h]
\centering
\begin{tikzpicture}[line cap=round,line join=round,>=triangle 45,x=2.5cm,y=2.5cm]
\clip(-2.27,-1.21) rectangle (1.99,1.16);
\draw (-0.89,0.5)-- (0,1);
\draw (0.89,0.5)-- (0,1);
\draw (-0.89,-0.5)-- (0,-1);
\draw (0.89,-0.5)-- (0,-1);
\draw (-0.89,0.5)-- (-0.89,-0.5);
\draw (0.89,0.5)-- (0.89,-0.5);
\draw [dotted] (0,1)-- (0,-1);
\draw [dotted] (-0.89,-0.5)-- (0.89,0.5);
\draw [dotted] (-0.89,0.5)-- (0.89,-0.5);
\draw (-0.89,0.09)-- (-0.44,0.75);
\draw (-0.44,0.75)-- (0.28,0.84);
\draw (0.28,0.84)-- (0.89,-0.38);
\draw (0.89,-0.38)-- (0.75,-0.57);
\draw (0.75,-0.57)-- (-0.48,-0.73);
\draw [dash pattern=on 1pt off 1pt] (0,0)-- (-0.44,0.75);
\draw [dash pattern=on 1pt off 1pt] (0,0)-- (0.28,0.84);
\draw [dash pattern=on 1pt off 1pt] (0,0)-- (0.89,-0.38);
\draw [dash pattern=on 1pt off 1pt] (0,0)-- (0.75,-0.57);
\draw [dash pattern=on 1pt off 1pt] (0,0)-- (-0.48,-0.73);
\draw [dash pattern=on 1pt off 1pt] (0,0)-- (-0.89,0.09);
\begin{scriptsize}
\fill [color=black] (-0.89,0.5) circle (0.5pt);
\draw[color=black] (-0.94,0.55) node {$P_1$};
\fill [color=black] (0,1) circle (0.5pt);
\draw[color=black] (-0.01,1.06) node {$P_2$};
\fill [color=black] (0.89,0.5) circle (0.5pt);
\draw[color=black] (0.95,0.54) node {$P_3$};
\fill [color=black] (0,1) circle (0.5pt);
\fill [color=black] (-0.89,-0.5) circle (0.5pt);
\draw[color=black] (-0.93,-0.52) node {$P_0$};
\fill [color=black] (0,-1) circle (0.5pt);
\draw[color=black] (0.01,-1.05) node {$P_5$};
\fill [color=black] (0.89,-0.5) circle (0.5pt);
\draw[color=black] (0.95,-0.53) node {$P_4$};
\fill [color=black] (0,-1) circle (0.5pt);
\fill [color=black] (0,0) circle (0.5pt);
\draw[color=black] (0.05,0.06) node {$O$};
\fill [color=black] (-0.89,0.09) circle (0.5pt);
\draw[color=black] (-0.96,0.1) node {$M_0$};
\fill [color=black] (-0.44,0.75) circle (0.5pt);
\draw[color=black] (-0.47,0.81) node {$M_1$};
\fill [color=black] (0.28,0.84) circle (0.5pt);
\draw[color=black] (0.31,0.89) node {$M_2$};
\fill [color=black] (0.89,-0.38) circle (0.5pt);
\draw[color=black] (0.95,-0.36) node {$M_3$};
\fill [color=black] (0.75,-0.57) circle (0.5pt);
\draw[color=black] (0.8,-0.62) node {$M_4$};
\fill [color=black] (-0.48,-0.73) circle (0.5pt);
\draw[color=black] (-0.5,-0.78) node {$M_5$};
\draw[color=black] (-0.28,0.4) node {$L_1$};
\draw[color=black] (0.22,0.42) node {$L_2$};
\draw[color=black] (0.5,-0.12) node {$L_3$};
\draw[color=black] (0.37,-0.32) node {$L_4$};
\draw[color=black] (-0.29,-0.35) node {$L_5$};
\draw[color=black] (-0.42,0.11) node {$L_0$};
\end{scriptsize}
\end{tikzpicture}
\caption{A centrally-projective polygon $\cppol(O;P_0,\ldots,P_{5})$ and a piece of trajectory after four projective bounces.}
\label{fig:polygon_presentation}
\end{figure}
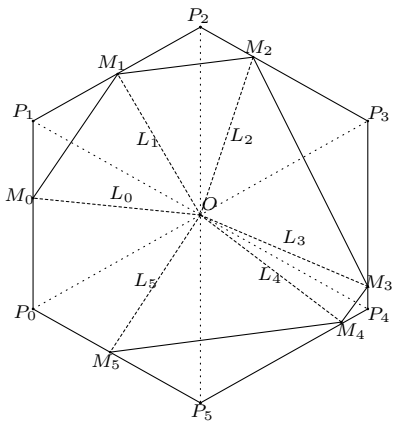

\begin{lemma}
\label{lemma:dtes_concourantes}
Fix an $m\in\pinteger$ and consider the great diagonal $L_m =P_mP_{m+k}$. Then for any $r\in\pinteger$, the lines $M_{m-r-2}M_{m-r-1}$ and $M_{m+r}M_{m+r+1}$ intersect $L_m$ at a same point.
\end{lemma}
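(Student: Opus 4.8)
The plan is to track how the trajectory lines meet the fixed great diagonal $D_m := P_mP_{m+k}$ (the line denoted $L_m$ in the statement), which passes through $O$ since $O$ is by definition the common point of all great diagonals. For each index $\ell$ I write $Q_\ell$ for the intersection point $M_\ell M_{\ell+1}\cap D_m$ and $R_\ell$ for the intersection $\gamma_\ell\cap D_m$ of the supporting line of the $\ell$-th side with the diagonal; the points $R_\ell$ depend only on the polygon, not on the orbit. The statement to prove is exactly $Q_{m+r}=Q_{m-r-2}$ for every $r\ge 0$.

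First I would extract a recursion for the $Q_\ell$ from the reflection law. At the bounce point $M_\ell$ the four lines $M_{\ell-1}M_\ell$, $M_\ell M_{\ell+1}$, $\gamma_\ell$ and $M_\ell O$ are harmonic. Projecting this pencil from $M_\ell$ onto the line $D_m$, a transversal not passing through $M_\ell$ in the generic case, preserves the cross-ratio, so the four image points $Q_{\ell-1}$, $Q_\ell$, $R_\ell$ and $O$ (note $M_\ell O\cap D_m=O$ because $O\in D_m$) form a harmonic quadruple $(Q_{\ell-1},Q_\ell;R_\ell,O)=-1$. Hence $Q_\ell$ is the harmonic conjugate of $Q_{\ell-1}$ with respect to the pair $\{R_\ell,O\}$; denoting by $h_\ell$ the unique projective involution of $D_m$ fixing $R_\ell$ and $O$ (well defined because $O$, being the centre, never lies on a side, so $R_\ell\ne O$), this reads $Q_\ell=h_\ell(Q_{\ell-1})$, and since $h_\ell$ is an involution, also $Q_{\ell-1}=h_\ell(Q_\ell)$. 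Iterating forward and backward from $Q_{m-1}$ then gives $Q_{m+r}=h_{m+r}\circ\cdots\circ h_m(Q_{m-1})$ and $Q_{m-r-2}=h_{m-r-1}\circ\cdots\circ h_{m-1}(Q_{m-1})$.

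The decisive ingredient is the reflective symmetry of the regular polygon. The line $D_m$ is an axis of symmetry of $\mathcal{P}$, and the corresponding reflection $\sigma$ fixes $O$, fixes $D_m$ pointwise, and sends $\gamma_\ell$ to $\gamma_{2m-1-\ell}$ (it interchanges the vertices $P_{m+j}$ and $P_{m-j}$). Applying $\sigma$ to $R_\ell=\gamma_\ell\cap D_m$ and using that $\sigma$ fixes every point of $D_m$ yields $R_\ell=R_{2m-1-\ell}$, and therefore $h_\ell=h_{2m-1-\ell}$, i.e. $h_{m+j}=h_{m-1-j}$. Substituting this into the forward composite turns $h_{m+r}\circ\cdots\circ h_m$ into $h_{m-r-1}\circ\cdots\circ h_{m-1}$, factor by factor and in the same order, which is precisely the backward composite. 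Consequently the two maps agree, so $Q_{m+r}=Q_{m-r-2}$, which is the claim; the base case $r=0$ is just the equality $R_m=R_{m-1}=P_m$, visible directly from $P_m\in D_m$.

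I expect the only real subtlety to be bookkeeping: getting the index reflection $\ell\mapsto 2m-1-\ell$ right so that the forward and backward involution strings match exactly, and confirming that the harmonic-conjugate involution is everywhere defined. The projective degeneracies (a line $M_\ell M_{\ell+1}$ parallel to $D_m$, or a bounce point $M_\ell$ landing on $D_m$ so that the projection from $M_\ell$ breaks down) are handled by working in $\rp^2$ and by analytic continuation, since $Q_{m+r}=Q_{m-r-2}$ is an algebraic relation that holds on a dense open set of orbits. The geometric heart of the argument, namely the harmonic recursion along the diagonal combined with $R_\ell=R_{2m-1-\ell}$, requires no computation.
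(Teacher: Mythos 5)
Your proof is correct and, at its core, is the same as the paper's: the harmonic relation $(Q_{\ell-1},Q_\ell;R_\ell,O)=-1$ obtained by projecting the reflection law at $M_\ell$ onto $L_m$, together with the mirror symmetry of the regular polygon giving $R_{m+j}=R_{m-1-j}$, are exactly the two ingredients of the paper's induction on $r$ (whose base case and inductive step correspond to your $j=0$ identity $h_m=h_{m-1}$ and to the factor-by-factor matching of your two composites). Recasting that induction as the equality of two compositions of harmonic involutions on $L_m$ is a tidy reformulation, but not a genuinely different argument.
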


\begin{figure}[!h]
\centering
\begin{tikzpicture}[line cap=round,line join=round,>=triangle 45,x=2.5cm,y=2.5cm]
\clip(-2.4,-0.65) rectangle (2.06,1.6);
\draw [line width=1.2pt] (0,0.8)-- (-0.8,0.6);
\draw [line width=1.2pt] (-0.8,0.6)-- (-1.4,0.2);
\draw [line width=1.2pt] (-1.4,0.2)-- (-1.8,-0.4);
\draw [line width=1.2pt] (0,0.8)-- (0.8,0.6);
\draw [line width=1.2pt] (0.8,0.6)-- (1.4,0.2);
\draw [line width=1.2pt] (1.4,0.2)-- (1.8,-0.4);
\draw [dash pattern=on 1pt off 1pt,domain=-1.0812456489024758:2.055306508092342] plot(\x,{(--0.62--0.3*\x)/0.72});
\draw [dash pattern=on 1pt off 1pt,domain=-1.5015993354037684:2.055306508092342] plot(\x,{(--0.57--0.36*\x)/0.42});
\draw [dash pattern=on 1pt off 1pt,domain=-2.0013299841538337:1.0812456489024758] plot(\x,{(-0.62--0.3*\x)/-0.72});
\draw [dash pattern=on 1pt off 1pt,domain=-2.0013299841538337:1.5015993354037684] plot(\x,{(-0.57--0.36*\x)/-0.42});
\draw [dotted] (0,-0.6) -- (0,1.6);
\draw (-1.5,0.05)-- (-1.08,0.41);
\draw (-1.08,0.41)-- (-0.36,0.71);
\draw (0.36,0.71)-- (1.08,0.41);
\draw (1.08,0.41)-- (1.5,0.05);
\draw (-0.36,0.71)-- (0.36,0.71);
\begin{scriptsize}
\fill [color=black] (0,0.8) circle (1.5pt);
\draw[color=black] (-0.03,0.6) node {$P_m$};
\fill [color=black] (-0.8,0.6) circle (1.5pt);
\draw[color=black] (-0.86,0.68) node {$P_{m-r}$};
\fill [color=black] (-1.4,0.2) circle (1.5pt);
\draw[color=black] (-1.6,0.27) node {$P_{m-r-1}$};
\fill [color=black] (-1.8,-0.4) circle (1.5pt);
\draw[color=black] (-2.07,-0.37) node {$P_{m-r-2}$};
\fill [color=black] (0.8,0.6) circle (1.5pt);
\draw[color=black] (0.83,0.67) node {$P_{m+r}$};
\fill [color=black] (1.4,0.2) circle (1.5pt);
\draw[color=black] (1.6,0.25) node {$P_{m+r+1}$};
\fill [color=black] (1.8,-0.4) circle (1.5pt);
\draw[color=black] (1.87,-0.37) node {$P_{m+r+2}$};
\fill [color=black] (-1.08,0.41) circle (1.5pt);
\draw[color=black] (-1.3,0.48) node {$M_{m-r-1}$};
\fill [color=black] (-0.36,0.71) circle (1.5pt);
\draw[color=black] (-0.37,0.83) node {$M_{m-r}$};
\fill [color=black] (-1.5,0.05) circle (1.5pt);
\draw[color=black] (-1.8,0.07) node {$M_{m-r-2}$};
\fill [color=black] (1.08,0.41) circle (1.5pt);
\fill [color=black] (0.36,0.71) circle (1.5pt);
\draw[color=black] (0.39,0.83) node {$M_{m+r-1}$};
\fill [color=black] (1.5,0.05) circle (1.5pt);
\draw[color=black] (1.8,0.09) node {$M_{m+r+1}$};
\fill [color=black] (1.08,0.41) circle (1.5pt);
\draw[color=black] (1.3,0.46) node {$M_{m+r}$};
\fill [color=black] (0,0) circle (0.5pt);
\draw[color=black] (0.15,0) node {$L_m$};
\end{scriptsize}
\end{tikzpicture}
\caption{As in the proof of Proposition \ref{prop:even_polygon_reflective}, since the lines $M_{m-r-1}M_{m-r}$ and $M_{m+r-1}M_{m+r}$ intersect $L_m$ at the same point, the lines  $M_{m-r-2}M_{m-r-1}$ and $M_{m+r}M_{m+r+1}$ also intersect $L_m$ at a same point.}
\label{fig:polygon_proof}
\end{figure}
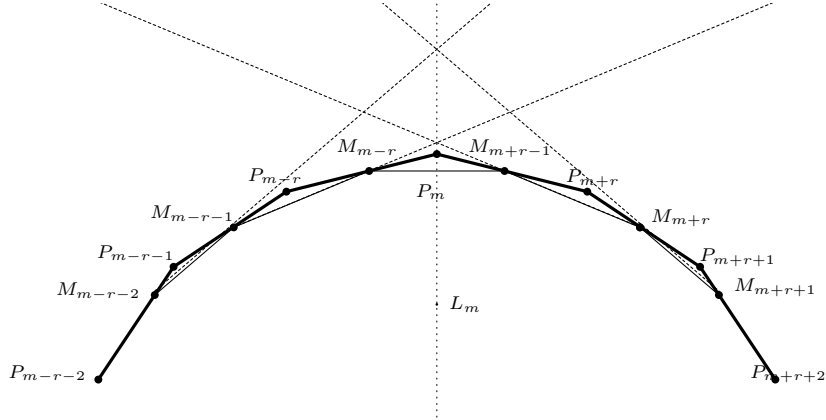

\begin{proof}
Let us prove Lemma \ref{lemma:dtes_concourantes} by induction on $r$.

\textsl{Case when $r=0$:} Fix any $m\in\integer$. Let $A$ be the intersection point of $M_{m-2}M_{m-1}$ with $L_m$, $A'$ of $M_mM_{m+1}$ with $L_m$ and $B$ of $M_{m-1}M_m$. Consider harmonic quadruples of points on $L_m$: $(A,B,P_m,O)$ is harmonic by the reflection law in $M_{m-1}$, and $(A',B,P_m,O)$ is harmonic by the reflection law in $M_{m+1}$. Hence $A=A'$ which concludes the proof for $r=0$.

\textsl{Inductive step:} suppose Lemma \ref{lemma:dtes_concourantes} is true for any $r'<r$ and let us prove it for $r$. See Figure \ref{fig:polygon_proof} for a detailled drawing of the situation. Fix an $m\in\integer$. By assumption, we know that $M_{m-r-1}M_{m-r}$ and $M_{m+r-1}M_{m+r}$ intersect $L_m$ at a same point $A$. Furthermore by symmetry of the regular polygon through $L_m$, the lines $P_{m-r-1}P_{m-r}$ and $P_{m+r}P_{m+r+1}$ intersect $L_m$ at the same point. Now, the three following lines through $M_{m-r-1}$, $M_{m-r-1}M_{m-r}$, $P_{m-r-1}P_{m-r}$, $M_{m-r-1}O$ intersect $L_m$ through the same points than the three following lines through $M_{m+r}$, $M_{m+r-1}M_{m+r}$, $P_{m+r}P_{m+r+1}$, $M_{m+r}O$. Hence in order to sastisfy the projection law at $M_{m-r-1}$ and $M_{m+r}$ respectively, the lines $M_{m-r-1}M_{m-r-2}$ and $M_{m+r}M_{m+r+1}$ should intersect at the same point. Hence the inductive step is over and this conclude the proof.
\end{proof}

\begin{proposition}
\label{prop:even_polygon_reflective}
Any $(M_0,M_1)\in P_0P_1\times P_1P_2$ with $M_0\neq M_1$ determines an $n$-periodic orbit of the centrally-projective regular polygon $\cppol(O;P_0,\ldots,P_{n-1})$.
\end{proposition}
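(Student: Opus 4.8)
The goal is to show that the orbit closes after $n=2k$ steps, i.e.\ that $M_{2k}M_{2k+1}=M_0M_1$ as lines of $\rp^2$. Since $M_{2k}$ lies on $\gamma_{2k}=\gamma_0$ and $M_{2k+1}$ on $\gamma_{2k+1}=\gamma_1$, once I know these two lines coincide I can read off $M_{2k}=M_0$ and $M_{2k+1}=M_1$, which is exactly $n$-periodicity. Writing $c_i$ for the line $M_iM_{i+1}$, the whole problem therefore reduces to proving the single identity $c_0=c_{2k}$.

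The plan is to use Lemma \ref{lemma:dtes_concourantes} to exhibit two distinct points through which both $c_0$ and $c_{2k}$ pass; since two points determine a line, this forces $c_0=c_{2k}$. First I apply the Lemma with $m=k+1$ and $r=k-1$: the two lines $M_{m-r-2}M_{m-r-1}=M_0M_1=c_0$ and $M_{m+r}M_{m+r+1}=M_{2k}M_{2k+1}=c_{2k}$ then meet the great diagonal $L_{k+1}=L_1$ at one and the same point, which I call $S$. This already produces a first common point of $c_0$ and $c_{2k}$, lying on $L_1$.

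To obtain a second common point on a \emph{different} diagonal, I relay the argument through the auxiliary chord $M_2M_3$. Applying the Lemma with $m=2$, $r=0$ shows that $c_0$ and $M_2M_3$ meet $L_2$ at a common point $R:=c_0\cap L_2$; applying it once more with $m=k+2$, $r=k-2$ shows that $M_2M_3$ and $c_{2k}$ meet $L_{k+2}=L_2$ at a common point as well. As $M_2M_3$ crosses $L_2$ only once, both of these common points equal $M_2M_3\cap L_2=R$, so $c_{2k}$ passes through $R$ too. Hence $c_0$ and $c_{2k}$ share the two points $S\in L_1$ and $R\in L_2$. (All three uses are legitimate: the required multipliers $r=k-1,\,0,\,k-2$ are nonnegative because $k\geq 2$.)

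It remains to check $S\neq R$, and this is the only delicate point. The diagonals $L_1$ and $L_2$ are distinct, and all great diagonals meet precisely at the center $O$; hence $S=R$ would force $S=R=O$, and in particular $O\in c_0$. But a chord through $O$ coincides at its endpoint $M_0$ with the transverse line $OM_0=L_0(M_0)$, so the projective reflection law degenerates there and such a chord cannot belong to a genuine orbit; thus $O\notin c_0$ and therefore $S\neq R$. Two distinct shared points give $c_0=c_{2k}$, whence $M_{2k}=M_0$ and $M_{2k+1}=M_1$, proving the proposition. The substantive obstacle is precisely the passage from Lemma \ref{lemma:dtes_concourantes}, which only pins one intersection point on a single diagonal, to an equality of lines: the key idea is to extract a second, independent incidence by routing through an auxiliary chord and a second great diagonal, after which the fact that distinct great diagonals meet only at $O$ is what guarantees the two incidences are genuinely distinct.
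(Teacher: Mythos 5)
Your proof is correct and is essentially the paper's own argument shifted by one index: the paper likewise applies Lemma \ref{lemma:dtes_concourantes} three times, with $(m,r)=(k,k-1)$, $(k+1,k-2)$ and $(1,0)$, to the lines $M_{-1}M_0$ and $M_{n-1}M_n$, relaying through the auxiliary chord $M_1M_2$ exactly as you relay through $M_2M_3$, and concludes that the two lines share a point on each of two distinct great diagonals. Your explicit check that $S\neq R$ is a small improvement over the paper, which tacitly assumes its two points $A$ and $B$ are distinct; that assumption can only fail in the degenerate case where the chord passes through $O$, where either the reflection is undefined (strict harmonic-quadruple reading, as you argue) or, if one defines reflection as the projective involution fixing $L$ and the edge, the whole orbit lies on one line through $O$ and is trivially $n$-periodic.
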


\begin{proof}
We have to show that $M_{-1}M_0 = M_{n-1}M_{n}$. We will use Lemma \ref{lemma:dtes_concourantes}. First, by setting $m=k$ and $r=k-1$, we conclude that the lines 
$M_0M_{-1}$ and the lines $M_{n-1}M_{n}$ intersect $L_k$ at a same point denoted by $A$. Then, by setting $m=k+1$ and $r=k-2$ we get that the lines $M_1M_2$ and $M_{n-1}M_n$ intersect $L_{k+1}=L_1$ at a same point $B$. Now it is also true that $M_0M_{-1}$ intersect $L_1$ at $B$, by Lemma \ref{lemma:dtes_concourantes} and setting $m=1$ and $r=0$. Hence we have shown that $M_{n-1}M_n=AB=M_0M_{-1}$ which concludes the proof.
\end{proof}

\begin{remark}
We note that the assumptions that $\mathcal{P}$ is regular and that $O$ is on the intersection of the great diagonals is fundamental to guarantee the reflectivity. On a simulation, see for example \cite{fierobe_simulation_projective_billiards}, the orbits are destroyed after small perturbations of $\mathcal{P}$'s vertices or of the origin. However, the next example we describe is much more stable in this sense.
\end{remark}

\section{Centrally-projective polygons and dual billiards}
\label{sec:centrally_proj_poly}

In this section we prove Proposition \ref{res:centrally_2n_reflective}. We first introduce some notations:

Let $n=2k+1$, $k\geq 1$, be an odd integer. Let $\mathcal{P}$ be an $n$-sided polygon, whose vertices are clockwise denoted by $P_0, P_1,\ldots,P_{n-1}$. We suppose that any three consececutive vertices are not on the same line. Choose another point $O$, which do not lie on any line of the type $\gamma_i:=P_iP_{i+1}$ ($i$ is taken modulo $n$). Construct a field of transverse lines $L_i$ on $\gamma_i$ by setting $L_i(M)=MO$ (the line joining the basepoint $M$ and $O$). The projective billiard table defined by the $\gamma_i$ and the $L_i$ is what we call the \textit{centrally-projective polygon} $\cppol(O;P_0,\ldots,P_{n-1})$ (see Figure \ref{fig:centrally_proj_general}).

We will prove the

\begin{proposition}
\label{prop:centrally_polygon_reflective}
Let $n=2k+1\geq 3$. Any $(M_0,M_1)\in P_0P_1\times P_1P_2$ with $M_0\neq M_1$ determines a $2n$-periodic orbit of the centrally-projective polygon $\cppol(O;P_0,\ldots,P_{n-1})$.
\end{proposition}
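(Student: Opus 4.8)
The plan is to linearize the whole dynamics by recognizing the projective reflection at a side as (the restriction of) a single global projective involution. Fix $O$ at the origin and, for each side $\gamma_i=P_iP_{i+1}$, let $H_i$ denote the \emph{harmonic homology} with center $O$ and axis $\gamma_i$, i.e. the projective involution of $\rp^2$ fixing $O$ and fixing $\gamma_i$ pointwise. The first (and crucial) step is to check that the billiard reflection at a point $M\in\gamma_i$ coincides with $H_i$ acting on lines. Indeed, $H_i$ fixes $M$ (it lies on the axis), so it induces an involution on the pencil of lines through $M$; the fixed lines of this induced involution are exactly $\gamma_i$ (the axis through $M$) and $OM=L_i(M)$ (the unique line through $M$ and the center), hence the induced involution is harmonic conjugation with respect to $\gamma_i$ and $L_i(M)$. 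This is precisely the projective reflection law of the introduction. Therefore, writing $\ell_m:=M_mM_{m+1}$ for the successive trajectory lines, one has the clean recursion $\ell_m=H_{m}(\ell_{m-1})$, where the index $m$ is read modulo $n$.

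Next I would turn periodicity into a statement about a single projective map. Composing the reflections once around the polygon gives $\ell_n=W(\ell_0)$ with
\[
W:=H_0\circ H_{n-1}\circ\cdots\circ H_1 ,
\]
the composition of the $n$ harmonic homologies sharing the common center $O$. Going around a second time yields $\ell_{2n}=W^2(\ell_0)$. Since $\ell_0=M_0M_1$ determines $M_{2n}=\ell_{2n}\cap\gamma_0$ and $M_{2n+1}=\ell_{2n}\cap\gamma_1$, the whole Proposition reduces to the single identity $W^2=\mathrm{Id}$ as a projective transformation (valid wherever the orbit is defined, hence on an open dense set of initial data — I would record a short genericity remark that the reflected lines avoid the degenerate positions $\gamma_i$ and $OM$).

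Finally I would prove $W^2=\mathrm{Id}$ by a parity computation, which is where the hypothesis that $n$ is \emph{odd} enters. Each $H_i$, as a linear map of $\real^3$, satisfies $H_iO=-O$ and induces the \emph{identity} on the quotient $\real^3/\langle O\rangle$ (equivalently, it fixes every line through $O$, since lines through the center of a homology are invariant). Both properties pass to the product, so $W$ induces the identity on $\real^3/\langle O\rangle$; thus $W=\mathrm{Id}+O\,\phi$ for some linear functional $\phi$, while $WO=(-1)^n O$ forces $\phi(O)=(-1)^n-1$. For $n$ odd this gives $\phi(O)=-2$, and a direct expansion yields $W^2=\mathrm{Id}+(2+\phi(O))\,O\,\phi=\mathrm{Id}$. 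I would phrase this through the promised dual-billiard picture: passing to the dual plane $(\rp^2)^\ast$, lines become points, the pencil of lines through $O$ becomes a fixed axis, the side-duals become the vertices of a dual polygon, and each $H_i$ becomes a harmonic homology with that common axis — after sending the axis to infinity, a central symmetry about the corresponding vertex. Then $W$ is a product of $n$ central symmetries, and an odd product of central symmetries is again a central symmetry (its linear part is $(-1)^nI=-I$), hence an involution; this is exactly the outer/dual billiard interpretation, and it explains why one must travel \emph{twice} around to close up. The main obstacle is the first step — rigorously identifying the harmonic reflection with the homology $H_i$ on the pencil through $M$ — together with the bookkeeping of the parity, which is what distinguishes the odd case from the even case (where $WO=+O$ and $W^2\neq\mathrm{Id}$ in general, consistent with Proposition~\ref{res:regular_centrally_reflective} needing regularity).
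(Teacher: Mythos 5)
Your proof is correct, but it takes a genuinely different route from the paper's. The paper also works in the dual plane: it dualizes each trajectory line $M_{k-1}M_k$ to a point $N_k$ and each side $\gamma_k$ to a point $Q_k$, and shows (by the harmonic-quadruple computation illustrated in Figure \ref{fig:line_inter}) that $N_{k+1}$ is the point reflection of $N_k$ about $Q_k$ --- this is the counterpart of your identification of the reflection law with the harmonic homology $H_i$. But from there the paper argues orbit-by-orbit: a Thales-type lemma gives $\overrightarrow{N_kN_{k+2}}=2\overrightarrow{Q_kQ_{k+1}}$, and periodicity follows from the telescoping identity $\sum_{j=0}^{n-1}\overrightarrow{Q_{2j}Q_{2j+1}}=0$, where oddness of $n$ enters through the index arithmetic modulo $n$. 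You instead prove a statement about the map rather than about each orbit: the once-around monodromy $W$ on lines is a product of $n$ harmonic homologies with common center $O$, and your eigenvalue computation ($W$ is the identity on $\real^3/\langle O\rangle$ and sends $O$ to $(-1)^nO$) shows $W^2=\mathrm{Id}$ when $n$ is odd --- dually, an odd product of point reflections is again a point reflection, hence an involution. Both proofs hinge on the same parity mechanism, but yours is more structural: it gives all orbits at once, explains conceptually why one must travel twice around (and why the even case fails, matching the restriction to regular polygons in Proposition \ref{res:regular_centrally_reflective}), whereas the paper's is more elementary (Chasles relation and vectors) and explicitly sets up the outer ghost billiard, which it wants as an object of independent interest. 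One small remark: your genericity hedge is unnecessary --- since $O$ lies on no $\gamma_i$, the transverse line $OM$ and hence the reflection (and your identification $\ell_m=H_m(\ell_{m-1})$) are defined at \emph{every} point of each projective line $\gamma_m$, so $W^2=\mathrm{Id}$ yields the conclusion for every pair $(M_0,M_1)$ with $M_0\neq M_1$, exactly as the Proposition requires.
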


\begin{figure}[!h]
\centering
\begin{tikzpicture}[line cap=round,line join=round,>=triangle 45,x=3.5cm,y=3.5cm]
\clip(-1.2,-0.64) rectangle (1.2,1.09);
\draw (0,1)-- (-0.94,-0.57);
\draw (-0.94,-0.57)-- (1,0.2);
\draw (1,0.2)-- (0,1);
\draw [dotted,domain=-1.9:1.7] plot(\x,{(-0.15--0.72*\x)/-0.15});
\draw [dotted,domain=-1.9:1.7] plot(\x,{(--0.25-0.08*\x)/0.85});
\draw [dotted,domain=-1.9:1.7] plot(\x,{(-0.18-0.85*\x)/-1.09});
\draw (0.07,-0.17)-- (-0.23,0.61);
\draw (-0.23,0.61)-- (0.75,0.4);
\draw (0.75,0.4)-- (0.68,0.07);
\draw (0.68,0.07)-- (-0.56,0.06);
\draw (0.47,0.62)-- (0.07,-0.17);
\draw (-0.56,0.06)-- (0.47,0.62);
\begin{scriptsize}
\fill [color=black] (0,1) circle (1.5pt);
\draw[color=black] (0.11,1.03) node {$P_0$};
\fill [color=black] (-0.94,-0.57) circle (1.5pt);
\draw[color=black] (-1.03,-0.56) node {$P_2$};
\fill [color=black] (1,0.2) circle (1.5pt);
\draw[color=black] (1.07,0.25) node {$P_1$};
\fill [color=black] (0.07,-0.17) circle (1.5pt);
\draw[color=black] (0.1,-0.25) node {$M_4$};
\fill [color=black] (-0.23,0.61) circle (1.5pt);
\draw[color=black] (-0.13,0.66) node {$M_5$};
\fill [color=black] (0.75,0.4) circle (1.5pt);
\draw[color=black] (0.83,0.46) node {$M_0$};
\fill [color=black] (0.15,0.28) circle (1.5pt);
\draw[color=black] (0.19,0.36) node {$O$};
\fill [color=black] (0.68,0.07) circle (1.5pt);
\draw[color=black] (0.75,0.01) node {$M_1$};
\fill [color=black] (-0.56,0.06) circle (1.5pt);
\draw[color=black] (-0.64,0.08) node {$M_2$};
\fill [color=black] (0.47,0.62) circle (1.5pt);
\draw[color=black] (0.51,0.69) node {$M_3$};
\end{scriptsize}
\end{tikzpicture}
\caption{A $6$-periodic orbit $(M_k)_k$ on a centrally-projective triangle $P_0P_1P_2$ with origin $O$. The dotted lines are representatives of the transverse fields of lines on the sides of the triangle.}
\label{fig:centrally_proj_general}
\end{figure}
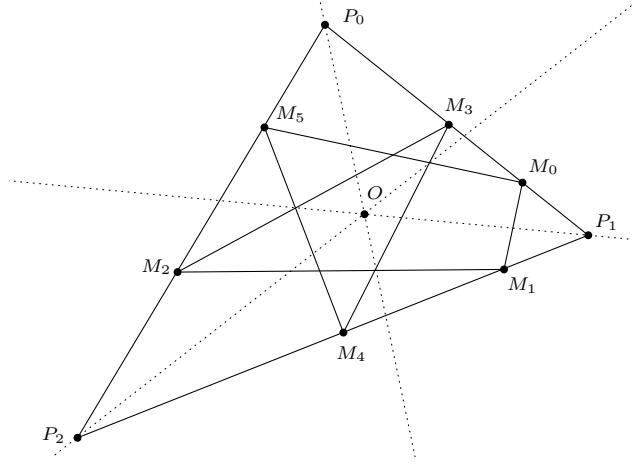

\subsection*{Outer ghost billiard associated to a centrally pojective billiard}

We can associate to any projective billiard a billiard called \textit{dual}, or \textit{outer billiard}, and this construction is well-explained in \cite{taba_projectif}. We will explain from the beginning the constructions which we will use, but for now, the already enlightened reader should keep in mind that the version of outer billiard we will use is very simplified, and the latter should be better called \textit{outer ghost billiard}. Usual outer billiards are much more complicated, see \cite{taba_dual1, taba_dual2} for more details. Let us first recall some notions about polar duality.

Fix a point $O\in\real^2$ and complete $\real^2$ with a line at infinity $\lambda$ to form $\rp^2$. Consider the polar duality with respect to the point $O$. It sends any point $p\in\rp^2$ to a line $p\etoile$ and any line $\ell\subset\rp^2$ to a point $\ell\etoile$ in a bijective way, such that $O\etoile=\lambda$ and $\lambda\etoile=O$. It is involutive in the sense that if you apply the polar duality with respect to $O$ two times, you get the identity: $p^{\ast\ast}=p$ and $\ell^{\ast\ast}=\ell$. Finaly polar duality has the incidence property: $p\in\ell$ if and only if $\ell\etoile\in p\etoile$.

\vsp
\noindent Now as in Figure \ref{fig:line_inter}, suppose that we are given four lines of $\real^2$, $\ell$, $\ell'$, $T$ and $L$ such that
\begin{itemize}
\item[--] $L$ passes through $O$;
\item[--] The four lines $\ell,\ell',L,T$ pass through the same point $p\neq O$ and are harmonic.
\end{itemize}
By the incidence property, the points $\ell\etoile$, $\ell^{'\ast}$, $L\etoile$ and $T\etoile$ belong to the line $p\etoile$, and are harmonic since the corresponding four lines are. But also $L\etoile\in O\etoile=\lambda$ is at infinity. By harmonicity, both vectors $\overrightarrow{T\etoile\ell\etoile}$ and $\overrightarrow{T\etoile\ell^{'\ast}}$ are opposite.

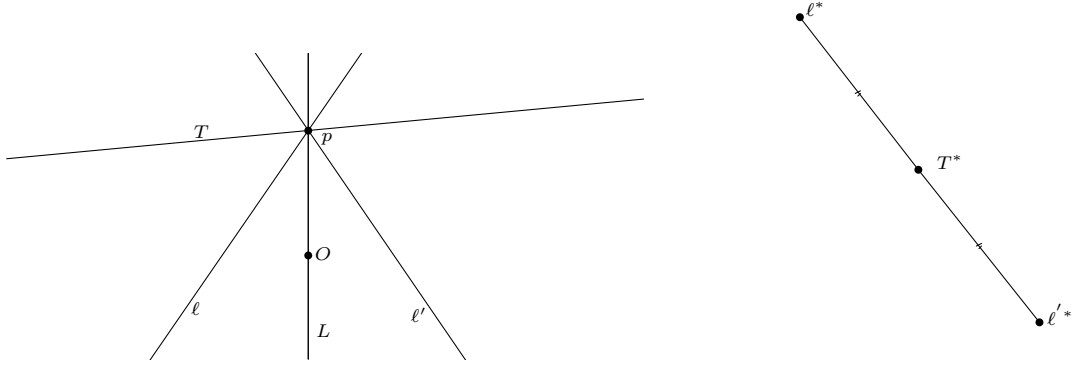
\begin{figure}[!h]
\centering
\begin{tikzpicture}[line cap=round,line join=round,>=triangle 45,x=1.0cm,y=1.0cm]
\clip(-4.01,-1.38) rectangle (4.46,2.69);
\draw [domain=-4.01:4.46] plot(\x,{(--5.32--0.3*\x)/3.2});
\draw [domain=-4.01:4.46] plot(\x,{(-2.76-2.41*\x)/-1.66});
\draw [domain=-4.01:4.46] plot(\x,{(--2.78-2.44*\x)/1.67});
\draw (0,-1.38) -- (0,2.69);
\draw (0,-1.38) -- (0,2.69);
\begin{scriptsize}
\fill [color=black] (0,0) circle (1.5pt);
\draw[color=black] (0.2,0.02) node {$O$};
\draw[color=black] (0.2,-1) node {$L$};
\draw[color=black] (-1.42,1.65) node {$T$};
\fill [color=black] (0,1.66) circle (1.5pt);
\draw[color=black] (0.25,1.55) node {$p$};
\draw[color=black] (-1.50,-0.71) node {$\ell$};
\draw[color=black] (1.47,-0.79) node {$\ell'$};
\end{scriptsize}
\end{tikzpicture}
\hspace{1cm}
\begin{tikzpicture}[line cap=round,line join=round,>=triangle 45,x=3.5cm,y=3.5cm]
\clip(-1.4,0) rectangle (0,1.4);
\draw (-1.16,1.3)-- (-0.71,0.72);
\draw (-0.93,1.02) -- (-0.95,1.01);
\draw (-0.93,1.01) -- (-0.94,1);
\draw (-0.71,0.72)-- (-0.25,0.14);
\draw (-0.47,0.44) -- (-0.49,0.43);
\draw (-0.47,0.43) -- (-0.48,0.42);
\begin{scriptsize}
\fill [color=black] (-1.16,1.3) circle (1.5pt);
\draw[color=black] (-1.1,1.33) node {$\ell\etoile$};
\fill [color=black] (-0.25,0.14) circle (1.5pt);
\draw[color=black] (-0.17,0.17) node {$\ell^{'\ast}$};
\fill [color=black] (-0.71,0.72) circle (1.5pt);
\draw[color=black] (-0.59,0.75) node {$T\etoile$};
\end{scriptsize}
\end{tikzpicture}
\caption{On the left: four harmonic lines $\ell$, $\ell'$, $L$ and $T$ through $p$. On the right: their polar duals with respect to $O$, $T\etoile$ is in the middle of the segment joining $\ell\etoile$ and $\ell^{'\ast}$, $L\etoile$ is at infinity.}
\label{fig:line_inter}
\end{figure}

We can apply this to a centrally-projective polygon $\cppol(O;P_0,\ldots,P_{n-1})$: denote by $Q_0,\ldots,Q_{n-1}$ the dual points of its sides, where $Q_k=P_kP_{k+1}\etoile$ ($k$ is tajen modulo $n$). Choose a projective orbit $(M_k)_{k\in\integer}$ of $\cppol(O;P_0,\ldots,P_{n-1})$, an consider, for all $k$, the dual point $N_k$ of the line $M_{k-1}M_{k}$; we call $(N_k)_k$ the dual orbit of $(M_k)_k$. Now we are ready to define the \textit{outer ghost billiard} on $Q_0\cdots Q_{n-1}$ (see Figure \ref{fig:outer_orbit}):

\begin{definition}
To any point $N_0$ distinct from $Q_0,\ldots, Q_{n-1}$, we associate the sequence of points $(N_k)_k$, called \textit{ghost outer orbit of the polygon $Q_0\cdots Q_{n-1}$}, and uniquely determined by the relation
$$\overrightarrow{N_kQ_k}=\overrightarrow{Q_kN_{k+1}}$$
for all $k\in\integer$ (where here $Q_k$ is used for $Q_{k\text{ mod }n}$). The orbit is said to be \textit{$m$-periodic} if $N_{m}=N_0$.
\end{definition}

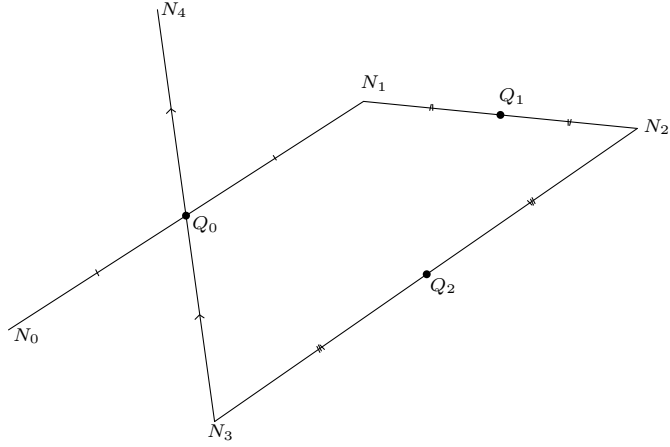
\begin{figure}[!h]
\centering
\begin{tikzpicture}[line cap=round,line join=round,>=triangle 45,x=2.0cm,y=2.0cm]
\clip(-2.4,-0.7) rectangle (2.4,2.4);
\draw (-2.27,0.07)-- (-1.09,0.83);
\draw (-1.69,0.47) -- (-1.67,0.43);
\draw (0.09,1.59)-- (-1.09,0.83);
\draw (-0.49,1.2) -- (-0.51,1.23);
\draw (0.09,1.59)-- (1,1.5);
\draw (0.54,1.57) -- (0.53,1.53);
\draw (0.55,1.57) -- (0.55,1.53);
\draw (1,1.5)-- (1.91,1.41);
\draw (1.45,1.47) -- (1.45,1.43);
\draw (1.47,1.47) -- (1.46,1.43);
\draw (1.91,1.41)-- (0.51,0.44);
\draw (1.23,0.91) -- (1.21,0.95);
\draw (1.22,0.9) -- (1.2,0.94);
\draw (1.21,0.9) -- (1.18,0.93);
\draw (0.51,0.44)-- (-0.9,-0.54);
\draw (-0.17,-0.06) -- (-0.2,-0.03);
\draw (-0.19,-0.07) -- (-0.21,-0.03);
\draw (-0.2,-0.08) -- (-0.22,-0.04);
\draw (-0.9,-0.54)-- (-1.09,0.83);
\draw (-1,0.17) -- (-0.97,0.15);
\draw (-1,0.17) -- (-1.03,0.14);
\draw (-1.09,0.83)-- (-1.28,2.2);
\draw (-1.19,1.54) -- (-1.16,1.52);
\draw (-1.19,1.54) -- (-1.22,1.51);
\begin{scriptsize}
\fill [color=black] (-1.09,0.83) circle (1.5pt);
\draw[color=black] (-0.96,0.77) node {$Q_0$};
\fill [color=black] (1,1.5) circle (1.5pt);
\draw[color=black] (1.08,1.62) node {$Q_1$};
\fill [color=black] (0.51,0.44) circle (1.5pt);
\draw[color=black] (0.62,0.36) node {$Q_2$};
\draw[color=black] (-2.15,0.03) node {$N_0$};
\draw[color=black] (0.16,1.71) node {$N_1$};
\draw[color=black] (2.04,1.42) node {$N_2$};
\draw[color=black] (-0.86,-0.61) node {$N_3$};
\draw[color=black] (-1.17,2.2) node {$N_4$};
\end{scriptsize}
\end{tikzpicture}
\caption{Five successive points, $N_k$ with $k=0\ldots 4$, of an outer ghost orbit of $Q_0Q_1Q_2$.}
\label{fig:outer_orbit}
\end{figure}

By previous discussion, we have the

\begin{proposition}
\label{prop:dual_association}
If $(M_k)_k$ is an orbit of $\cppol(O;P_0,\ldots,P_{n-1})$, its dual orbit $(N_k)_k$ is an outer ghost orbit of $Q_0\cdots Q_{n-1}$. Furthermore, $(M_k)_k$ is $m$-periodic if and only if $(N_k)_k$ is $m$-periodic.
\end{proposition}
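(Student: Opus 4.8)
The plan is to read the statement as a pure dictionary translation: the first part follows by dualizing the projective reflection law at each vertex of the orbit, and the second part follows because polar duality is a bijection between lines and points, so periodicity statements about lines become periodicity statements about their dual points. I would assume throughout the discussion preceding Figure \ref{fig:line_inter}, which is exactly the computation that converts a harmonic configuration through a point $p\neq O$ (one of whose lines passes through $O$) into a midpoint relation among the dual points.

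First I would fix a single index $k$ and unwind the reflection law at $M_k$. By definition of $\cppol(O;P_0,\ldots,P_{n-1})$, the four lines $\ell=M_{k-1}M_k$, $\ell'=M_kM_{k+1}$, $L=L_k(M_k)=M_kO$ and $T=\gamma_k$ all pass through $M_k$ and are harmonic. Since $O$ lies on none of the sides we have $M_k\neq O$, and $L=M_kO$ passes through $O$; thus these four lines meet precisely the hypotheses of the discussion before Figure \ref{fig:line_inter}. Dualizing with respect to $O$ and applying the incidence property, I get $\ell\etoile=N_k$, $\ell'\etoile=N_{k+1}$, $T\etoile=Q_k$, while $L\etoile\in O\etoile=\lambda$ is at infinity. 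The conclusion of that discussion is that $T\etoile$ is the midpoint of the segment joining $\ell\etoile$ and $\ell'\etoile$, i.e.\ $Q_k$ is the midpoint of $N_kN_{k+1}$, which reads $\overrightarrow{N_kQ_k}=\overrightarrow{Q_kN_{k+1}}$. As this holds for every $k\in\integer$, the sequence $(N_k)_k$ satisfies the defining relation of a ghost outer orbit of $Q_0\cdots Q_{n-1}$, proving the first assertion.

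For the periodicity equivalence I would write the outer relation as the reflection $N_{k+1}=2Q_k-N_k$ and use that duality is a bijection, so two lines coincide if and only if their dual points coincide. Since $m=kn$ is a multiple of $n$ we have $Q_m=Q_{m\bmod n}=Q_0$, whence $N_{m+1}=2Q_m-N_m=2Q_0-N_m$ and $N_1=2Q_0-N_0$. Now $(M_k)_k$ is $m$-periodic iff $M_mM_{m+1}=M_0M_1$ iff (dualizing) $N_{m+1}=N_1$ iff $N_m=N_0$, which is exactly the $m$-periodicity of $(N_k)_k$.

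I do not expect a serious analytic obstacle here, as the argument is a translation across the duality; the only delicate points are bookkeeping. I must keep the index shift $N_k=(M_{k-1}M_k)\etoile$ consistent, so that the projective step $M_{k-1}M_k\mapsto M_kM_{k+1}$ corresponds to the outer step $N_k\mapsto N_{k+1}$ rather than to a shifted version of it. I should also record the nondegeneracy needed to make the dual points genuinely finite and the conclusion a true affine midpoint relation: each $M_k$ must stay off $O$ (guaranteed since $O$ is on no side), and each line $M_{k-1}M_k$ must be distinct from the side lines so that $N_k$ is well defined and $\ell\etoile\neq\ell'\etoile$, so that the harmonic quadruple does not collapse.
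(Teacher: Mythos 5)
Your proof is correct and is essentially the paper's own argument: the paper justifies this proposition with nothing more than ``by previous discussion,'' which is exactly the dualization of the harmonic quadruple $(M_{k-1}M_k,\,M_kM_{k+1},\,L_k(M_k),\,\gamma_k)$ at each reflection point that you carried out, yielding the midpoint relation $\overrightarrow{N_kQ_k}=\overrightarrow{Q_kN_{k+1}}$. Your explicit bridge between the two periodicity conventions --- $M_mM_{m+1}=M_0M_1$ dualizes to $N_{m+1}=N_1$, which via $N_{k+1}=2Q_k-N_k$ and $Q_m=Q_0$ is equivalent to $N_m=N_0$ --- is a bookkeeping step the paper leaves implicit, and you handle it (along with the finiteness caveats) correctly.
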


Therefore, to prove Proposition \ref{prop:centrally_polygon_reflective}, we only need to show the

\begin{proposition}
\label{prop:reflect_outer}
Let $n=2k+1\geq 3$. Any outer ghost orbit of $Q_0\cdots Q_{n-1}$ is $2n$-periodic.
\end{proposition}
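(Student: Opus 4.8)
The plan is to translate the defining relation of the outer ghost orbit into a purely affine statement and then exploit a parity argument. The relation $\overrightarrow{N_kQ_k}=\overrightarrow{Q_kN_{k+1}}$ says exactly that $N_{k+1}=2Q_k-N_k$, i.e. $N_{k+1}=\sigma_{Q_k}(N_k)$, where $\sigma_{Q}$ denotes the central symmetry (point reflection) $x\mapsto 2Q-x$ about a point $Q$. Thus a single step of the outer ghost billiard is a point reflection, and since the centers cycle with period $n$, after $2n$ steps we obtain
$$N_{2n}=\bigl(\sigma_{Q_{n-1}}\circ\cdots\circ\sigma_{Q_0}\bigr)^2(N_0)=G^2(N_0),\qquad G:=\sigma_{Q_{n-1}}\circ\cdots\circ\sigma_{Q_0}.$$
So it suffices to prove that $G^2=\mathrm{Id}$, which is what I would aim for.

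The key observation is that each $\sigma_{Q_k}$ is an affine involution of $\real^2$ whose linear part is $-\mathrm{Id}$. Composing $n$ of them yields an affine map $G$ whose linear part is $(-\mathrm{Id})^n$; since $n=2k+1$ is odd, this linear part is again $-\mathrm{Id}$. Now any affine transformation of $\real^2$ with linear part $-\mathrm{Id}$ has a unique fixed point $c$ and, expressed relative to $c$, is precisely the point reflection $\sigma_c$; hence $G=\sigma_c$ is itself a central symmetry. Central symmetries are involutions, so $G^2=\sigma_c\circ\sigma_c=\mathrm{Id}$, which gives $N_{2n}=N_0$ and proves $2n$-periodicity for every orbit.

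I would present the linear-part computation explicitly, since it is the heart of the argument and the exact place where the parity hypothesis on $n$ enters: the whole statement reduces to the elementary fact that a composition of an odd number of point reflections is again a point reflection. There is essentially no analytic difficulty here. The only point deserving a word of care is to justify that $G$ genuinely possesses a fixed point, so that it really is a central symmetry and not merely an affine map with linear part $-\mathrm{Id}$; this is immediate because $\mathrm{Id}-(-\mathrm{Id})=2\,\mathrm{Id}$ is invertible, so the fixed-point equation $G(c)=c$ has the unique solution $c$ equal to half the translation part of $G$. It is also worth remarking, as a sanity check against Proposition \ref{prop:even_polygon_reflective}, that for even $n$ this composition would instead be a translation, trivial only under additional symmetry of the polygon and a special choice of $O$; this is precisely why the unconditional $2n$-periodicity is available only in the odd case.
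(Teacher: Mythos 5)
Your proof is correct, and it takes a genuinely different (though closely related) route from the paper's. The paper first proves Lemma \ref{lemma:thales}, namely $\overrightarrow{N_kN_{k+2}}=2\overrightarrow{Q_kQ_{k+1}}$ --- in your language, the composition of two consecutive point reflections $\sigma_{Q_{k+1}}\circ\sigma_{Q_k}$ is the translation by $2\overrightarrow{Q_kQ_{k+1}}$ --- and then computes the total displacement $\overrightarrow{N_0N_{2n}}=2\sum_{j=0}^{n-1}\overrightarrow{Q_{2j}Q_{2j+1}}$, showing it vanishes by a reindexing argument: since $n$ is odd, the indices $2j \bmod n$ run over every residue exactly once, so the sum is the telescoping sum of all edge vectors of the closed polygon $Q_0\cdots Q_{n-1}$. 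You instead compose all $n$ steps at once: the linear part of $G=\sigma_{Q_{n-1}}\circ\cdots\circ\sigma_{Q_0}$ is $(-\mathrm{Id})^n=-\mathrm{Id}$ because $n$ is odd, so $G$ is itself a central symmetry (you correctly justify the existence of its fixed point), hence an involution, and $N_{2n}=G^2(N_0)=N_0$. The parity hypothesis thus enters at different places: in the paper it makes the reindexing close up the polygon, for you it makes the linear part equal to $-\mathrm{Id}$. Your route is shorter and yields slightly more --- the $n$-step map is an involution, so every orbit is symmetric about an explicit center --- while the paper's route is more elementary and explicitly exhibits the displacement vector whose vanishing is equivalent to periodicity; your closing remark about the even case (where $G$ is a translation, trivial only under extra symmetry) is exactly the right way to see why Proposition \ref{res:centrally_2n_reflective} requires $n$ odd while Proposition \ref{res:regular_centrally_reflective} requires regularity.
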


\begin{proof}
Let $(N_k)_k$ be an outer ghost orbit of $Q_0\cdots Q_{n-1}$. We first show the

\begin{lemma}
\label{lemma:thales}
For $k\in\integer$, we have $\overrightarrow{N_kN_{k+2}}=2\overrightarrow{Q_kQ_{k+1}}$.
\end{lemma}

\begin{proof}
Here we are in a Thales configuration and the proof follows. Indeed, by Chasles relation and the equalities $\overrightarrow{N_kQ_k}=\overrightarrow{Q_kN_{k+1}}$, $\overrightarrow{N_{k+1}Q_{k+1}}=\overrightarrow{Q_{k+1}N_{k+2}}$, we can write that
$$\overrightarrow{N_kN_{k+2}}=\overrightarrow{N_kQ_k}+\overrightarrow{Q_kN_{k+1}}+\overrightarrow{N_{k+1}Q_{k+1}}+\overrightarrow{Q_{k+1}N_{k+2}} = 2\overrightarrow{Q_kN_{k+1}}+2\overrightarrow{N_{k+1}Q_{k+1}}=2\overrightarrow{Q_kQ_{k+1}}.$$
\end{proof}

\noindent To conclude the proof, we need to show that $N_0=N_{2n}$. Indeed by Lemma \ref{lemma:thales} we have
$$\overrightarrow{N_0N_{2n}} = \sum_{j=0}^{n-1}\overrightarrow{N_{2j}N_{2j+2}} = 2\sum_{j=0}^{n-1}\overrightarrow{Q_{2j}Q_{2j+1}}$$
and we want to show that this is equal to $0$. By splitting the latter sum at $k$, where $n-1=2k$, we get
$$\sum_{j=0}^{n-1}\overrightarrow{Q_{2j}Q_{2j+1}} = \sum_{j=0}^{k}\overrightarrow{Q_{2j}Q_{2j+1}} + \sum_{j=k+1}^{n-1}\overrightarrow{Q_{2j}Q_{2j+1}}$$
and the latter sum can be rewritten with the change of index $j'=j-k$  since $\overrightarrow{Q_{2j}Q_{2j+1}}=\overrightarrow{Q_{2j-n}Q_{2j+1-n}}=\overrightarrow{Q_{2j'-1}Q_{2j'}}$, to get 
$$\sum_{j=0}^{n-1}\overrightarrow{Q_{2j}Q_{2j+1}} = \sum_{j=0}^{k}\overrightarrow{Q_{2j}Q_{2j+1}} + \sum_{j'=1}^{k+1}\overrightarrow{Q_{2j'-1}Q_{2j'}}=0.$$
which concludes the proof.
\end{proof}

\begin{proof}[Proof of Proposition \ref{prop:centrally_polygon_reflective}]
Since any outer ghost orbit of $Q_0\cdots Q_{n-1}$ is $2n$-periodic (Proposition \ref{prop:reflect_outer}), so is any projective orbit of $\cppol(O;P_0,\ldots,P_{n-1})$ by Proposition \ref{prop:dual_association}.
\end{proof}

\section*{Acknowledgments}

I'm very grateful to Simon Allais for his explanations about harmonic lines and how to prove Proposition \ref{prop:3_reflective} with this method, and Alexey Glutsyuk for its very helpful advices.

\end{document}